\title[Non compact Euclidean cone 3--manifolds with cone angles less
than $2\pi$]{Non compact Euclidean cone 3--manifolds\\with cone angles less
than $2\pi$}
\author{Daryl Cooper}
\address{Department of Mathematics\\
University of California at Santa Barbara\\\newline
Santa Barbara CA 93106\\
USA}
\email{cooper@math.ucsb.edu}
\urladdr{}
\author{Joan Porti}
\address{Departament de Matem\`atiques\\
Universitat Aut\`onoma de Barcelona\\\newline
E-08193 Bellaterra\\
Spain}
\email{porti@mat.uab.cat}
\urladdr{}
\def\cnewtheorem#1[#2]#3{\newtheorem{#1}{#3}[section]
\expandafter\let\csname c@#1\endcsname\c@dfn}
\theoremstyle{definition}
\newtheorem{dfn}{Definition}[section]
\theoremstyle{plain}
\newcommand{\SO}{\mathit{SO}}
\newcommand{\Spin}{\mathit{Spin}}
\begin{document}

\begin{htmlabstract}
We describe some properties of noncompact Euclidean cone manifolds with
cone angles less than c<2&pi; and singular locus a submanifold. More
precisely, we describe its structure outside a compact set.  As a
corollary we classify those with cone angles <3&pi;/2
and those with all cone angles =3&pi;/2.
\end{htmlabstract}

\begin{abstract}
We describe some properties of noncompact Euclidean cone manifolds with
cone angles less than $c<2\pi$ and singular locus a submanifold. More
precisely, we describe its structure outside a compact set.  As a
corollary we classify those with cone angles $<{3\pi}/2$
and those with all cone angles $=3\pi/2$.
\end{abstract}

\begin{asciiabstract}
We describe some properties of noncompact Euclidean cone manifolds with
cone angles less than c<2pi and singular locus a submanifold. More
precisely, we describe its structure outside a compact set.  As a
corollary we classify those with cone angles <3pi/2
and those with all cone angles =3\pi/2.
\end{asciiabstract}

\maketitle

\section{Introduction}

In this paper we study non-compact orientable Euclidean cone 3--manifolds
with cone angles  less than $2 \pi$.
 When the cone angles are $\leq \pi$ these manifolds are classified:
they play a key role in the proof of the orbifold theorem, as  they are
rescaled limits of collapsing sequences of hyperbolic or spherical cone
manifolds (see Boileau--Porti~\cite{BP} and Cooper--Hodgson--Kerckhoff
\cite{CHK}).  The aim of this paper is to  have  some understanding when
the cone angles lie between $\pi$ and $2\pi$.

We will fix an \emph{upper bound of the cone angles} $c<2\pi$.
The reason is that if we only impose cone angles  $<2\pi$, the
singular locus can have infinitely many components.

For simplicity, we will also restrict to the case where the \emph{singular
set is a submanifold}.

{Besides the isometry type of a cone manifold $E$, we are
also interested in the topology of the pair $(\vert E\vert , \Sigma)$,
where $\vert E\vert$ denotes its underlying topological space and $\Sigma$
its singular locus.}

The first tool to study those cone manifolds is the soul theorem of
Cheeger and Gromoll~\cite{CG}, or its cone manifold version.  The soul
can have dimension 0, 1 or 2. If the dimension is 1 or 2, then the cone
manifold is easy to describe, the difficulties arise when the soul is
zero dimensional (that is, just a point).  {The reader familiar
with~\cite{BP} should be aware that the definiton of the soul used in
this paper differs form that used there, where it was adapted to cone
manifolds with cone angle $\leq \pi$.}

The following proposition says that the singular locus is
unknotted, provided there are no compact singular components.

\begin{prop} \label{prop:sigmaunkotted}
Let $E$ be a Euclidean cone 3--manifold with cone angles $\leq c<
2\pi $ and soul a point. Assume that its singular locus $\Sigma$ is a
non-empty submanifold.  If all components of $\Sigma$ are non-compact,
then the pair {$(\vert E \vert ,\Sigma)$} is homeomorphic
to $\mathbb{R}^3$ with some straight lines.
\end{prop}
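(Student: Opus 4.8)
The plan is to use the soul point as the center of a radial structure and to show, up to homeomorphism of pairs, that $(\vert E\vert,\Sigma)$ is the open cone on a pair $(S^2,F)$, where $S^2$ is a level sphere of the distance function to the soul and $F$ is a finite subset; the cone on such a pair is precisely $\mathbb{R}^3$ together with straight lines through the origin, which is the required model. Throughout I use that cone angles $<2\pi$ make $E$ a nonnegatively curved Alexandrov space, so that the critical point theory of the distance function is available.

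First I would record the underlying topology. Let $p$ be the soul and $\rho=d(\cdot,p)$. By the cone--manifold version of the soul theorem of Cheeger and Gromoll~\cite{CG} there is a distance--nonincreasing (Sharafutdinov) retraction of $E$ onto $p$; since the soul is a single point and $E$ is a topological $3$--manifold near $p$, the space of directions at $p$ is a spherical cone surface whose underlying space is $S^2$, so $\vert E\vert$ is homeomorphic to the open cone on $S^2$, i.e.\ to $\mathbb{R}^3$. This already yields the first half of the statement; the work is in tracking $\Sigma$.

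Next I would set up the radial product structure. Because the soul is a point, nonnegative curvature forces $\rho$ to have no critical points (in the sense of Grove--Shiohama) on $E\setminus\{p\}$, so the gradient--like flow of $\rho$ gives, for $0<a<b$, a homeomorphism $\rho^{-1}[a,b]\cong S_a\times[a,b]$, where $S_t:=\rho^{-1}(t)\cong S^2$, and identifies $E\setminus\{p\}$ with $S_a\times(0,\infty)$. Each component of $\Sigma$ is a complete totally geodesic line; using the upper bound $c<2\pi$ (and the structure statements established earlier in the paper) one shows that $\Sigma$ has only finitely many components and that $F_t:=\Sigma\cap S_t$ is a finite set of \emph{constant} cardinality for all large $t$. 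The totally geodesic property lets the flow be chosen to carry $\Sigma$ to $\Sigma$, upgrading the product decomposition to a homeomorphism of pairs $(\rho^{-1}[a,b],\,\Sigma\cap\rho^{-1}[a,b])\cong(S_a,F_a)\times[a,b]$. Letting the inner radius tend to $0$ then exhibits $(\vert E\vert,\Sigma)$ as the open cone on $(S^2,F)$. Since each non-compact singular component has two ends, it contributes two points to $F$, so coning produces a finite family of straight lines through the origin, automatically unknotted and unlinked, which is exactly $\mathbb{R}^3$ with some straight lines.

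The main obstacle is the relative isotopy: in the Alexandrov/cone setting the gradient--like flow of $\rho$ is only Lipschitz and the level sets $S_t$ fail to be smooth along $\Sigma$, so one must work to make the flow genuinely preserve $\Sigma$ and produce an \emph{ambient} isotopy of pairs. Concretely I expect the difficulty to lie in checking transversality of the singular geodesics to the spheres $S_t$ — equivalently, ruling out a singular line that is asymptotically tangent to some $S_t$ — and in controlling the flow uniformly in a tubular neighborhood of each singular line, where the local model is a cone over a spherical cone--surface. It is precisely here that the hypotheses \emph{soul a point} and $c<2\pi$ are essential: they force the absence of critical points away from $p$ and bound the number and the transverse behavior of the singular components, so that the cross--sectional pair $(S_t,F_t)$ stabilizes and the resulting cone is the standard line arrangement claimed by the proposition.
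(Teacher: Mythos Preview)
Your approach via the distance function $\rho=d(\cdot,p)$ and its gradient--like flow is different from, and heavier than, the paper's. The paper does not use metric spheres $S_t=\rho^{-1}(t)$ at all; it uses the sets $C_t$ of the Cheeger--Gromoll filtration (sublevel sets of the supremum of Busemann functions of rays from $p$). The crucial property is that each $C_t$ is \emph{totally convex}: any geodesic segment with endpoints in $C_t$ lies in $C_t$, even if the segment is not minimizing. A non-compact singular component is a complete geodesic, so its intersection with each $C_t$ is a (possibly empty) interval, and these intervals increase with $t$. Since the soul is a point, each $C_t$ is a $3$--ball, and the nested family of pairs $(C_t,\Sigma\cap C_t)$ directly exhibits $(\vert E\vert,\Sigma)$ as $\mathbb{R}^3$ with straight lines. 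One line of convex geometry (a totally convex set meets a complete geodesic in an interval) replaces your entire flow argument.

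This comparison matters because it dissolves exactly the obstacle you flag. You worry about making a Lipschitz gradient flow preserve $\Sigma$ and about transversality of singular geodesics to level spheres; with the $C_t$ neither issue arises, since the interval property of $\Sigma_i\cap C_t$ is pure convexity and needs no smoothness, no flow, and no transversality. Your outline is not wrong in spirit, but the gap you yourself identify is real and you do not close it---you only assert that the hypotheses ``should'' force the desired behavior. Note too that metric balls around $p$ are not totally convex in general, so the interval property genuinely relies on the Busemann construction rather than on $\rho$; this is another reason to prefer the paper's exhaustion over yours.
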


When there are compact singular components, we have a nice description
away from a compact set. We start with some examples.  The \emph{angle
defect} of a singularity is $2\pi$ minus the cone angle.

\begin{exm}\label{ex1}
Consider   a Euclidean cone metric on $D^2$ with totally geodesic
boundary. Such a metric exists if and only if the sum of the cone
angle defects is $2\pi$. This metric can be enlarged to a complete
metric on the plane by adding a flat cylinder $S^1\times [0,\infty)$,
where $S^1\times\{0\}=\partial D^2$.  Take the product with $\mathbb{R}$, so that we get a three dimensional manifold with closed parallel
geodesics. Some of those geodesics can be easily replaced by singular
geodesics, provided that the cone angle defects add up to $\pi$. See
\fullref{fig:simplest}.
\end{exm}

\begin{figure}
\begin{center}
\labellist\small\hair1pt
\pinlabel {$\alpha_1$} [r] at 45 200
\pinlabel {$\alpha_2$} [r] at 72 200
\pinlabel {$\alpha_3$} [r] at 108 200
\pinlabel {$\alpha_4$} [r] at 144 200
\pinlabel {$\beta_1$} [l] at 216 82
\pinlabel {$\beta_2$} [l] at 210 56
\endlabellist
\includegraphics[scale=.75]{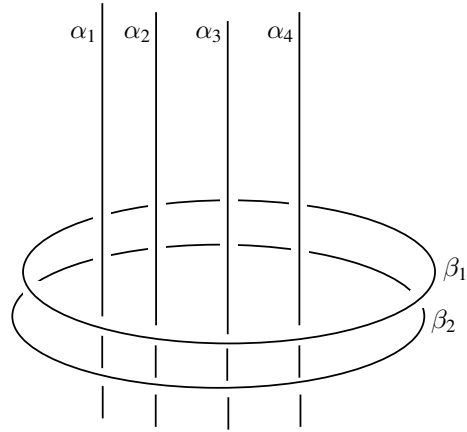}
\end{center}
 
\caption{The singular locus of a cone manifold as in \fullref{ex1}. The underlying space is $\mathbb{R}^3$, and 
the  angle defects satisfy
   $\sum(2\pi-\alpha_i)=2\pi$  and $\sum(2\pi-\beta_i)\leq \pi$.}
\label{fig:simplest}
\end{figure}

The topology of the pair {$(\vert E\vert,\Sigma)$} is more involved in the next example,
but it can still be described in terms of rational tangles.

\begin{exm}~\label{ex2}
The product $[0,1]\times \mathbb{R}^2$ is bounded by two parallel
planes. Take a geodesic on each plane such that,
when parallel transported, they intersect with an angle
equal to a rational multiple of $2\pi$.
Consider the cone manifold obtained by folding these planes along
these lines, so that the folding  lines become the singular locus, with
cone angle $\pi$. Then, the foliation by segments
$[0,1]\times\{*\}$ gives rise to a foliation by geodesic circles,
or intervals with end-points in the singular locus. Again, some of the
closed geodesics can be replaced by singular geodesics with small
cone angle defect. The group of transformations in the plane
generated by reflections on the two lines is a dihedral group with
$2n$ elements, and the sum of the cone angle defects now is
bounded above by $\pi/n$. See \fullref{fig:rational}.

This example can be further perturbed to replace the edges with
cone angle $\pi$ by several singular edges with cone angle defects
whose sum is $\pi$.
\end{exm}

\begin{figure}[ht!]
\begin{center}
\labellist\small
\pinlabel {$\pi$} [l] at 22 14
\pinlabel {$\pi$} [l] at 220 14
\pinlabel {$\alpha_1$} [l] at 230 143
\pinlabel {$\alpha_2$} [l] at 230 76
\pinlabel {$\alpha_3$} [l] at 230 56
\endlabellist
\includegraphics[scale=.50]{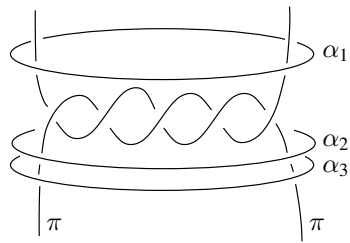}
\end{center}
    \caption{The singular locus of a cone manifold as 
    in \fullref{ex2}. $\sum(2\pi{-}\alpha_i){\leq}\frac{\pi}n$}
\label{fig:rational}
\end{figure}

Notice  that away from a compact set both examples are similar.

\begin{thm}
\label{thm:asymptotic}
 Let $E$ be a non-compact Euclidean cone 3--manifold with cone
angles $\leq c<2\pi$ and such that $\Sigma$ is a submanifold.
Assume that the soul of $E$ is a point and that it has a compact
singular component.
Then there exists a compact subset $K$ such that:
\begin{enumerate}
\item either $K=D_1=D_2$ or $\partial K=D_1\cup_{\partial} D_2$, where
$D_1$ and $D_2$ are totally geodesic discs {with singular points and}
 with geodesic boundary $\partial
D_1=\partial D_2=D_1\cap D_2$.
\item $E-\operatorname{int}(K)$ can be
decomposed isometrically in three product pieces:  $ D_1\times
[0,+\infty)$, $ D_2\times [0,+\infty)$ and  ${\mathcal E^2}\times S^1$, where
{${\mathcal E^2}$} denotes a
two dimensional Euclidean sector (that is, its boundary is two half
lines) with {singular} points. The pieces  are glued so that $D_i\times
[0,\infty)\cap K=D_i\times \{0\}$ and $\partial {\mathcal E^2}\times
S^1=\partial D_1\times [0,+\infty)\cup_{\partial D_i\times\{0\}}
\partial D_2\times [0,+\infty)$.
\item The dihedral angle between the discs $D_1$ and $D_2$  in $\partial K$ is
$\leq $ $\pi$ minus the sum of the angle defects in the sector
${\mathcal E^2}$.
\end{enumerate}
 \end{thm}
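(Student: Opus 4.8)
The plan is to use that cone angles $<2\pi$ make $E$ a locally compact Alexandrov space of nonnegative curvature, so that the soul construction, Toponogov comparison, and the flat strip (flat cylinder) theorem are all available in their cone-manifold forms. The engine is the compact singular circle. Fix such a component $\gamma$: it is a closed geodesic, and since $E$ is flat in a punctured neighbourhood of $\gamma$, a tube about it is isometric to a product of a flat cone disc with $\gamma$. The circles parallel to $\gamma$ in this tube are closed geodesics of the same length, freely homotopic to $\gamma$, and I would continue them outward. By the flat cylinder theorem any two disjoint closed geodesics in this class cobound a flat annulus, so they all have length $|\gamma|$ and foliate a neighbourhood of infinity by parallel circles. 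The leaf space is a two dimensional flat cone surface $B$ whose interior cone points are exactly the compact singular circles; because the circle direction has bounded length it collapses under rescaling, so the asymptotic cone of $E$ is two dimensional. I would identify $B$ outside a compact set with a flat Euclidean sector $\mathcal{E}^2$, and the flatness of the foliation upgrades the fibration of this part of the end to an isometric product $\mathcal{E}^2\times S^1$, the third piece.

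Next I would produce the totally geodesic discs. Since the soul is a point, $E$ is contractible and the foliating circles are null-homotopic, so each bounds a disc; the fibration degenerates exactly along the boundary $\partial B$, which consists of two geodesic rays, and over each ray the circle caps off by a totally geodesic disc. Each such disc is flat with geodesic boundary, so Gauss-Bonnet forces the cone angle defects it encloses -- which come from the non-compact singular components crossing it -- to sum to exactly $2\pi$. These are $D_1$ and $D_2$; pushed to infinity along the two rays they become $D_1\times[0,\infty)$ and $D_2\times[0,\infty)$, glued to $\mathcal{E}^2\times S^1$ along $\partial D_i\times[0,\infty)$, with $\partial D_1=\partial D_2=c$ the fibre over the vertex of the sector. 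Taking $K$ to be the compact core carrying the soul and all the concentrated curvature assembles the decomposition; $K$ is a ball bounded by $D_1\cup_\partial D_2$, degenerating to $K=D_1=D_2$ when the two walls of $B$ coincide.

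For the angle bound I would run Gauss-Bonnet on the leaf surface $B$, viewed as a flat cone orbifold with mirror boundary $\partial B$ and interior cone points the compact singular circles. Relating its vertex angle along $c$ to its interior defects, and using that any curvature beyond the sector is trapped inside $K$ and enters with a favourable sign, gives $\angle(D_1,D_2)\le \pi-\delta$, where $\delta$ denotes the sum of the cone angle defects of $\mathcal{E}^2$; equality holds in the pure product examples where no curvature is hidden in $K$.

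The main obstacle I anticipate is not this coarse picture but upgrading it to an honest isometric decomposition valid all the way in to $\partial K$: one must show the parallel closed geodesics foliate a full neighbourhood of the end with no degeneration except the capping along $\partial B$, construct the totally geodesic spanning discs with the correct boundary regularity, and control the non-compact singular lines, which reappear as the cone points of the $D_i$, so that they neither obstruct the foliation nor create extra ends. The bound $c<2\pi$ is what keeps the singular data finite, and verifying that finiteness together with the existence and regularity of the capping discs is where the real work lies.
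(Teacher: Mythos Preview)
Your engine runs in the wrong direction. You want to push parallel copies of the closed singular geodesic $\gamma$ outward and invoke a flat cylinder theorem to get a foliation of the end by circles of constant length. But $E$ is contractible (soul a point), so every closed curve is null-homotopic and there is no variational or homotopical mechanism forcing closed geodesics parallel to $\gamma$ to exist once you leave the local product tube. The flat cylinder theorem tells you that \emph{if} two closed geodesics cobound, the region between them is flat; it does not manufacture the outer geodesic for you. The paper reverses this: it starts from \emph{rays} at points of $C$ (these exist for free by noncompactness), shows by Toponogov that each ray is perpendicular to $C$ and can be parallel-transported around $C$ with trivial monodromy to sweep out a flat half-infinite cylinder, and then the two outermost such cylinders cut off the piece $\mathcal{E}^2\times S^1$ by a cut--and--reglue argument that produces an auxiliary cone manifold with cone angle $<\pi$ along $C$, where the one-dimensional soul case applies.

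The second, harder, gap is in the two remaining ends, which you propose to cap by totally geodesic discs that appear where ``the fibration degenerates''. You correctly flag this as the real work, but your proposal contains no mechanism to do it. In the paper this is a substantial asymptotic argument: one first shows, via the Tits boundary being a point and Gromov--Hausdorff limits, that each end is \emph{asymptotically} a product $X^2\times\mathbb{R}$ with the non-compact singular components asymptotically parallel, and then upgrades ``asymptotically parallel'' to ``exactly parallel'' by a holonomy computation in $\Spin(3)\cong S^3$: the product of the meridian rotations around a parallel copy of $C$ must hit $\pm 1$, and a length/angle estimate on the corresponding broken geodesic in $S^3$ forces it to be a genuine great-circle arc of length $\pi$, hence the axes are parallel. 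Without this rigidity step you only get an asymptotic product, not the isometric splittings $D_i\times[0,\infty)$ that the theorem asserts. Your Gauss--Bonnet argument for the dihedral bound is fine in spirit, but in the paper the inequality drops out directly from the construction: the angle between any ray and the Cheeger--Gromoll sublevel sets is $\ge\pi/2$, so the portion of the cone angle at $C$ available to $K$ is at most the cone angle minus the sector angle minus $\pi$.
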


\begin{figure}[ht!]
\begin{center}
\labellist\small
\pinlabel {$D_1$} [b] at 92 90
\pinlabel {$D_2$} [t] at 92 68
\pinlabel {$K$} [b] at 180 76
\pinlabel {$C$} [r] at 68 82
\pinlabel {$\mathcal{E}^2\times S^1$} [l] at 0 104
\pinlabel {$\alpha_1$} [b] at 116 154
\pinlabel {$\alpha_2$} [b] at 136 154
\pinlabel {$\alpha_3$} [b] at 152 154
\pinlabel {$\alpha_4$} [b] at 172 154
\pinlabel {$\beta_1$} [t] at 126 10
\pinlabel {$\beta_2$} [t] at 162 10
\pinlabel {$\gamma_1$} [l] at 230 82
\pinlabel {$\gamma_2$} [b] at 252 38
\endlabellist
\includegraphics[scale=.75]{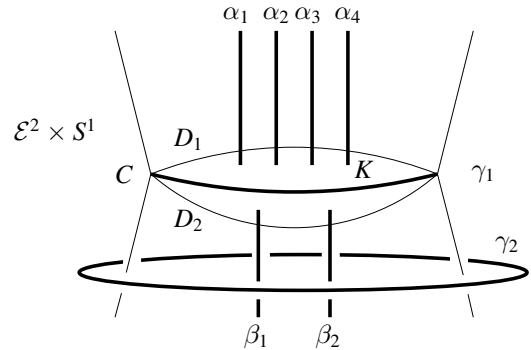}
\end{center}
\caption{Cone manifold as in  \fullref{thm:asymptotic}. The interior
of $K$ is not described in this picture and the singular locus is
represented thicker.  The angle defects satisfy
$\sum(2\pi-\alpha_i)=\sum(2\pi-\beta_i)=2\pi$ and
$\sum(2\pi-\gamma_i)\leq\pi$}
\end{figure}

 It can happen that both discs are the same: $K=D_1=D_2$, as in \fullref{ex1}.

\begin{cor}
\label{cor:2pi/3} Let $E$ be a cone manifold as in
\fullref{thm:asymptotic}. If the cone angles are $<\frac{3\pi}2$,
then $E$ is as in \fullref{ex1}.
\end{cor}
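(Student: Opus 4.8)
The plan is to push the angle hypothesis through part (3) of \fullref{thm:asymptotic}, conclude that the sector carries a single singular circle, and then force the two discs to coincide, so that by the Remark following the theorem we land in \fullref{ex1}. First I would invoke \fullref{thm:asymptotic} to produce $K$, the totally geodesic discs $D_1,D_2$ with $\partial D_1=\partial D_2=C$, and the decomposition of $E-\operatorname{int}(K)$ into $D_1\times[0,+\infty)$, $D_2\times[0,+\infty)$ and $\mathcal{E}^2\times S^1$. Then I locate $\Sigma$ in this picture: in each end $D_i\times[0,+\infty)$ the singular set is $(\Sigma\cap D_i)\times[0,+\infty)$, a union of rays meeting $D_i$ orthogonally and hence non-compact; in $\mathcal{E}^2\times S^1$ it is $(\Sigma\cap\mathcal{E}^2)\times S^1$, a union of circles. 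Thus a compact component of $\Sigma$ is precisely a cone point of the sector $\mathcal{E}^2$ crossed with $S^1$, and the hypothesis supplies at least one such point.

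The next step is bookkeeping with part (3). A cone point of $\mathcal{E}^2$ of cone angle $<3\pi/2$ has angle defect $>\pi/2$, while part (3) bounds the total sector defect by $\pi-\theta\le\pi$, where $\theta$ is the dihedral angle of $D_1,D_2$ along $C$. Two cone points would contribute more than $\pi$, so $\mathcal{E}^2$ carries exactly one, of defect $d\in(\pi/2,\pi]$, and $0\le\theta\le\pi-d<\pi/2$. If $d=\pi$ then part (3) already gives $\theta=0$; the real work is in the range $d<\pi$.

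The heart of the matter — and the step I expect to be the real obstacle — is to rule out a strictly positive $\theta$. Here I would argue that two totally geodesic discs cannot meet along $C$ at a small positive angle, because a nonzero dihedral angle is \emph{quantized}. Concretely, the half-discs $D_i\times\{0\}$ are carried into one another by the holonomy around $C$ coming from the sector, and for the developing map of $\mathcal{E}^2$ to close up this holonomy must be of finite order: the sector apex angle is $\pi/n$ and $\theta=\pi-\pi/n$ for an integer $n\ge1$. Since $n=1$ means $\theta=0$ and $D_1=D_2$, any genuine fold has $n\ge2$ and hence $\theta\ge\pi/2$, contradicting $\theta<\pi/2$ from the previous paragraph. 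This is exactly the mechanism behind the dihedral group of order $2n$ and the bound $\sum(2\pi-\gamma_i)\le\pi/n$ in \fullref{ex2}: a real fold must spend its sector defect on an orbit of at least $n\ge2$ small cone points, each of cone angle $\ge3\pi/2$, which the hypothesis forbids. Establishing this quantization — equivalently, that the only flat, soul-a-point way to join two totally geodesic discs along a common geodesic is either parallel ($n=1$) or at a dihedral angle $\ge\pi/2$ — is where the argument really lives.

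With $\theta=0$ the discs share the geodesic $C$ and therefore coincide, so $K=D_1=D_2$; by the Remark after \fullref{thm:asymptotic} this is the product construction of \fullref{ex1}, the disc $D=D_1$ furnishing the cone points $\alpha_i$ with $\sum(2\pi-\alpha_i)=2\pi$ and the sector furnishing the single circle of defect $d\le\pi$. Everything outside the quantization step is routine bookkeeping; the hypothesis $<3\pi/2$ enters only through the two inequalities $2\pi-\gamma>\pi/2$ (at most one sector circle) and the resulting $\theta<\pi/2$ (no genuine fold).
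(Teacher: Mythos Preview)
Your proposal has two problems, one small and one fatal.

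The small one: your bookkeeping in the second paragraph assumes that the compact singular component guaranteed by the hypotheses of \fullref{thm:asymptotic} is an interior cone point of $\mathcal{E}^2$ crossed with $S^1$. It is not. In the proof of \fullref{thm:asymptotic} the compact singular geodesic $C$ is the circle over the \emph{tip} of the sector, not over an interior cone point; there may be no interior cone points at all, in which case part~(3) only yields $\theta\le\pi$. The paper bypasses this by bounding $\theta$ directly: any direction in $K$ makes angle $\ge\pi/2$ with any ray, and rays are perpendicular to $C$, so the dihedral angle of $K$ is at most the cone angle at $C$ minus $\pi$, hence $<\tfrac{3\pi}{2}-\pi=\tfrac{\pi}{2}$. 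This is easy to repair.

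The fatal one is your ``quantization'' step. You assert that the dihedral angle must be of the form $\theta=\pi-\pi/n$ for an integer $n\ge 1$, so that $\theta<\pi/2$ forces $n=1$ and $\theta=0$. There is no such constraint. Nothing in the structure of \fullref{thm:asymptotic} forces the apex angle of $\mathcal{E}^2$ to be $\pi/n$, nor is there a holonomy ``around $C$ coming from the sector'' that must have finite order and carry $D_1$ to $D_2$. The dihedral-group picture of \fullref{ex2} is a feature of that particular construction, not a general restriction. So the heart of your argument is missing.

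The paper's route from $\theta<\pi/2$ to $\theta=0$ is entirely different and purely geometric. One shrinks $K$ by taking superlevel sets $K_t$ of the distance to $\partial K$; the faces of $K_t$ stay parallel to $D_1,D_2$ and the singular arcs inside $K$ remain perpendicular to them. When a cone point on one face reaches the edge of $K_t$, the singular line there is perpendicular to that face. At an edge point the tangent wedge of $K_t$ has opening $\theta$, so the perpendicular direction (at angle $\pi/2$ from the face) lies inside the wedge only when $\theta>\pi/2$, and coincides with the other face only when $\theta=0$. For $0<\theta<\pi/2$ the singular line would have nowhere to go, which is impossible. This local incidence argument at the edge is what actually pins down $\theta=0$; your proposal needs to replace the quantization claim with it (or with something equivalent).
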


Notice that in the case of \fullref{ex1} the topology of the
singular locus is the simplest one. In particular it is the case
when cone angles are $<\frac{3\pi}2$.

The topology of \fullref{ex2}   is still easy to understand in
terms of rational 2--tangles. We shall illustrate in
\fullref{sec:edgepatterns} that when cone angles are
$\frac{3\pi}2$ the topology may be more involved.
We shall describe cone manifolds with all cone angles precisely equal to 
$\frac{3\pi}{2}$.

\subsection{Organization of the paper} In \fullref{sec:euclideancm} 
we recall the basic properties for Euclidean cone manifolds, including
the soul theorem of Cheeger and Gromoll. \fullref{sec:onetwo}
deals with the case of one or two dimensional soul. The zero dimensional case
and the proof of \fullref{thm:asymptotic} is the content of Sections~\ref{sec:zero}
and~\ref{sec:asymptotic}. Finally \fullref{sec:edgepatterns} deals with the case
where all cone angles equal $\frac{3\pi}{2}$.

\subsection*{Acknowledgements} This research was supported by the
Catalan government through grant 2003BEAI400228, 
{FEDER/MEC grant BFM2003-03458},
the CRM at Barcelona, UCSB and NSF grant DMS-0405963.

\section{Euclidean cone manifolds}
\label{sec:euclideancm}

A \emph{Euclidean cone $3$--manifold} $E$ is
a smooth $3$--manifold  equipped with a
metric so that it is a complete length space locally isometric to
\begin{itemize}
\item either the Euclidean space $\mathbb{R}^3$ (smooth points),

\item or a neighborhood of a singular edge (singular points).
\end{itemize}
The local model of the singular points is given,  in cylindrical
coordinates, by the following metric
\[
    d s^2 =
     dr^2+\left(\frac{\alpha}{2\pi} r\right)^2 d\theta^2+dh^2
     \]
where $r\in (0,+\infty)$ is the distance from the singular axis $\Sigma$,
$\theta\in [0,2\pi)$ is the rescaled angle parameter around $\Sigma$
and $h\in\mathbb{R}$ is the distance along $\Sigma$.
The angle $\alpha>0$ is called the  \emph{singular angle}.
When $\alpha=2\pi$ this is the standard smooth metric of  $\mathbb{R}^3$.

{According to our definition, the singular locus} $\Sigma$ is a submanifold of
codimension two and the cone angle is constant along each connected
component.
For cone manifolds in general one must allow singular vertices too.

We shall also consider two dimensional cone manifolds;
that is, by taking polar coordinates $(r,\theta)$
in the previous description of the singularity, so that the singular locus is discrete.
Isolated singular points are also called cone points.

\begin{rem}
Since we assume that the cone angles are less than $2\pi$, the cone
manifolds considered here are \emph{Alexandrov spaces of non-negative
curvature}, hence the corresponding comparison results apply (see
Burago--Burago--Ivanov \cite{BBI} and Burago--Gromov--Perel'man
\cite{BGP}): Toponogov comparison for triangles and hinges, the splitting
theorem, etc.
\end{rem}

For instance, using comparison results, in
Boileau--Leeb--Porti~\cite[Proposition~8.3]{BLP} it is proved:

\begin{prop}
The number of singular components of  a Euclidean cone 3--manifold
with cone angles $\leq c<2\pi$ is finite.
\end{prop}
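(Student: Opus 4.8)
The plan is to separate the singular components into the compact ones (circles) and the non-compact ones (lines), and to bound each family using that $E$ is a complete Alexandrov space of nonnegative curvature together with the fact that every singular component carries an angle defect at least $2\pi-c>0$. The common principle is that a positive lower bound on the defect, combined with a bounded ``amount of room'' (finite area of a positively curved cross--section, or local finiteness of a submanifold), forces finiteness.

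First I would bound the compact components. Fix a basepoint $p$ and invoke the soul theorem to obtain a compact soul $S$ and a compact set $K_0\supset S$ outside of which the distance function $d(p,\cdot)$ has no critical points, so that $E\setminus K_0$ is ``cone--like at infinity''. A singular circle is a closed geodesic, and the restriction of $d(p,\cdot)$ to it attains a maximum; at a farthest point the critical--point theory of distance functions in nonnegative curvature forbids this point to lie in the regular region $E\setminus K_0$. Hence every compact singular component meets $K_0$. Since $\Sigma$ is a submanifold its components are locally finite, so only finitely many can intersect the compact set $K_0$, which bounds the number of compact components.

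For the non-compact components I would pass to the tangent cone at infinity $C_\infty E$, which exists and is again a nonnegatively curved metric cone of dimension at most $3$, with link a compact Alexandrov space $\Sigma_\infty$ of dimension $\leq 2$ and curvature $\geq 1$; in the principal case $\dim\Sigma_\infty=2$ one has $\operatorname{area}(\Sigma_\infty)\leq 4\pi$. Each non-compact singular component is eventually a ray, and distinct such rays determine distinct cone points of $\Sigma_\infty$ with cone angle $\leq c$, hence angle defect $\geq 2\pi-c$. Applying the Gauss--Bonnet (Cohn--Vossen) estimate to the positively curved surface $\Sigma_\infty$, the sum of these defects is at most $2\pi\,\chi(\Sigma_\infty)\leq 4\pi$, so the number of non-compact components is at most $4\pi/(2\pi-c)$; the degenerate cases $\dim\Sigma_\infty\leq 1$ are even more restrictive and are handled the same way.

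The main obstacle is the asymptotic analysis of the third step: one must justify that the tangent cone at infinity is genuinely a metric cone whose cross--section is a $2$--dimensional Alexandrov space of curvature $\geq 1$ and bounded area, and, crucially, that each non-compact singular geodesic survives as an honest cone point of definite angle defect in $\Sigma_\infty$ (equivalently, that two distinct non-compact singular rays subtend an angle bounded below at infinity). This requires controlling the flat development of $E$ outside a compact set and verifying that the defect of a singular edge is not lost in the blow--down; the soul structure and the product description of the ends are exactly what make this control available. By contrast, the confinement of the compact components is comparatively routine once the critical--point theory of $d(p,\cdot)$ is in place.
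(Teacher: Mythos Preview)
The paper does not actually prove this proposition: it is quoted from Boileau--Leeb--Porti~\cite[Proposition~8.3]{BLP} with the single remark that it follows ``using comparison results''. So there is no proof in the paper to compare with, and your proposal has to be judged on its own merits.

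Your treatment of the \emph{non-compact} components is essentially sound, and in fact the paper carries out the same Gauss--Bonnet count later, in the proof of \fullref{prop:toproduct}: instead of passing to the tangent cone at infinity, one applies Gauss--Bonnet directly to $\partial C_t$ for a large level of the Cheeger--Gromoll filtration. Each non-compact singular geodesic meets $\partial C_t$ in two points, each contributing curvature at least $2\pi-c$, and $\partial C_t$ has nonnegative intrinsic curvature; this gives the bound with no need to justify convergence of singular rays in a blow-down. Using $\partial C_t$ rather than the ideal boundary sidesteps exactly the obstacle you flag in your last paragraph.

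Your argument for the \emph{compact} components, however, has a genuine gap. The assertion that a farthest point $q\in C$ from $p$ must be a critical point of $d_p$ in the Grove--Shiohama sense is not justified, and in the regime of interest it can fail. First variation at $q$ only tells you that each of the two tangent directions $v_\pm$ to $C$ makes angle $\le\pi/2$ with \emph{some} minimal segment to $p$; it says nothing about the remaining directions at $q$. Concretely, if the minimal segment from $q$ to $p$ is unique with initial direction $u$, the two conditions force $u$ to lie on the equator of the link (the circle of directions orthogonal to $C$). That equator has length equal to the cone angle $\alpha$, and when $\alpha>\pi$ the equatorial point opposite $u$ is at angle $\alpha/2>\pi/2$ from $u$; hence $q$ is \emph{regular}. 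So for cone angles in $(\pi,2\pi)$ --- precisely the range this paper is about --- your confinement step does not go through. One can see that Busemann functions are constant on a closed geodesic (convex on a circle implies constant), so each compact component lies in some $\partial C_{t(C)}$; but this does not, by itself, give a single compact $K_0$ catching all of them, and local finiteness alone is then not enough. Some further comparison input (of the kind \cite{BLP} invokes) is needed to rule out closed singular geodesics escaping to infinity.
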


\subsection{The soul and the Cheeger--Gromoll filtration}
We recall the construction of Cheeger--Gromoll filtration
(see Cheeger--Gromoll~\cite{CG} and Boileau--Porti~\cite{BP}).
Let $E$ be a non-compact Euclidean cone manifold without singular
vertices and cone angles $\leq c<2\pi$.
Given a point $p\in E$, we consider all rays $r\co[0,+\infty)\to E$ starting at $p$.
Since $E$ has non-negative curvature in the Alexandrov
sense, Busemann functions $b_{r}\co C\to\mathbb R$ are convex. 
{For every {$t\in\mathbb{R}$}, define}
$$
C_t=\{x\in E\mid b_r(x)\leq t\textrm{ for all rays } r\textrm{ starting at
}p\}.
$$
The sets $C_t$ are convex, give a filtration of $E,$ and {\color{black}if $C_{t_2}$ is not empty then 
for $t_2\geq t_1$}
$$
\partial C_{t_1}=\{x\in C_{t_2}\mid d(x, \partial C_{t_2})=t_2-t_1\}.
$$
{To construct the soul, we start with the smallest $t$ for which $C_t$ is not
empty.  Then $C_t$ is a convex manifold of dimension less than $3.$}
If this lower dimensional manifold has boundary, we
continue to decrease the set by taking distance subsets to the
boundary. We stop when we get a submanifold without boundary
(possibly a point), which is the soul $S$.

The sets of this filtration are totally convex, that is, no geodesic
segment with endpoints in this sets can exit them, even if the
geodesic is not totally minimizing. The fact that $S$ is totally
convex determines the topology {of the underlying space $\vert E\vert$,
which is a topological bundle over the soul with fiber $\mathbb{R}^k$, but 
not the topology of the singular locus.
The metric of $E$ is easy to describe when $\dim
S=2, 1$, as we discuss in the following section.}

\section{One or two dimensional soul}
\label{sec:onetwo}

Let $E$ be a non-compact Euclidean cone 3--manifold with cone angles $\leq
c<2\pi$, and denote by $S$ its soul.

\begin{prop}If $\dim(S)=2$, then $E$ is isometric to
\begin{itemize}
\item either a product of $\mathbb{R}$ with a Euclidean 2--sphere with cone
points,
\item or its orientable quotient, a bundle over the projective
plane with cone points.
\end{itemize}
\end{prop}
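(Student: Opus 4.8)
The plan is to combine the Gauss--Bonnet theorem, which identifies the soul up to homeomorphism, with the splitting theorem for nonnegatively curved Alexandrov spaces, which upgrades the topological bundle structure to an isometry; a double cover handles the nonorientable base.

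First I would identify the soul. Since $\dim S = 2$ and $S$ is totally geodesic, it is a closed flat cone surface whose curvature is concentrated, with positive sign, at the cone points, each contributing angle defect $2\pi-\alpha>0$ because the cone angles are $<2\pi$. By Gauss--Bonnet the total curvature equals $2\pi\chi(S)>0$, so $\chi(S)>0$ and hence $S$ is homeomorphic to $S^2$ or to $\mathbb{RP}^2$, in either case carrying finitely many cone points. The description of the Cheeger--Gromoll filtration already gives that $|E|$ is the total space of a line bundle (fibre $\mathbb{R}$) over $S$.

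Next I would produce a line in an orientation-adapted cover. Let $\hat E\to E$ be the connected double cover whose restriction to the soul is $S^2\to\mathbb{RP}^2$ (take $\hat E=E$ when $S\cong S^2$); then $\hat E$ is again a flat cone $3$--manifold, orientable, with soul $\hat S\cong S^2$ and with $|\hat E|$ a line bundle over $S^2$, hence trivial, so $\hat S$ separates $\hat E$. Pick a smooth point $q\in\hat S$ whose normal geodesic $\gamma\colon\mathbb{R}\to\hat E$ is initially disjoint from the singular locus. A Toponogov hinge comparison, using that $\gamma$ meets the totally convex set $\hat S$ orthogonally, shows $d(\gamma(s),\hat S)=|s|$ for all $s$; since $\hat S$ separates, any arc from $\gamma(-a)$ to $\gamma(b)$ must cross $\hat S$, whence $d(\gamma(-a),\gamma(b))=a+b$ and $\gamma$ is a line. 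The splitting theorem then yields an isometry $\hat E\cong\mathbb{R}\times Y$ with $Y$ a complete flat cone surface; because the soul of $\hat E$ is $2$--dimensional, $Y$ is compact, and orientability forces $Y\cong S^2$ with cone points, the singular locus being a union of fibres $\mathbb{R}\times\{\text{cone point}\}$. If $S\cong S^2$ this is already the first conclusion. If $S\cong\mathbb{RP}^2$, then $E=\hat E/\iota$ for the free isometric deck involution $\iota$ of $\mathbb{R}\times Y$; the de Rham splitting of the isometry group gives $\iota=(a,b)$ with $a\in\mathrm{Isom}(\mathbb{R})$ and $b\in\mathrm{Isom}(Y)$, and since $\iota$ preserves the soul we get $a(0)=0$. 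As $\mathbb{R}\times(Y/b)$ would be nonorientable when $a=\mathrm{id}$, orientability of $E$ forces $a$ to be the reflection $t\mapsto -t$ and $b$ a fixed-point-free involution of $S^2$, so $Y/b\cong\mathbb{RP}^2$ and $E$ is exactly the orientable quotient, a twisted $\mathbb{R}$-bundle over the projective plane with cone points.

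The main obstacle is the middle step: converting the purely topological bundle structure into a genuine line, that is a bi-infinite minimizing geodesic on which to run the splitting theorem. This is where the hinge comparison and the separation of the soul in the orientable cover do the real work, and it is also where one must check that the cone points do not obstruct the argument: they lie along the splitting direction, and so are compatible with, rather than an obstruction to, the product structure.
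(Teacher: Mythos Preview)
Your proof is correct and follows essentially the same strategy as the paper: produce a line and apply the splitting theorem, then handle the nonorientable soul via a double cover. The paper's version is terser---it obtains the line simply by noting that an orientable soul forces two ends, rather than via your explicit normal-geodesic and separation argument, and it omits your careful analysis of the deck involution---but the underlying approach is the same.
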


On the Euclidean 2--sphere   the cone angle defects add up to $4\pi$, and on
the projective plane, $2\pi$. In particular, the upper bound
$c<2\pi$ gives an explicit bound on the number of singular components.

\begin{proof} If the soul $S$ is orientable, then
$E$ has two ends and, by the splitting theorem, $E=S\times\mathbb{R}$. Since $S $ is a compact Euclidean surface, it must be a sphere
with cone points, whose angle defects add up to $4\pi$. If $S$ is
non-orientable, then $E$ is the orientable bundle over $S$, which
is a projective plane. By taking the double cover, we reduce to the previous case.
\end{proof}

\begin{prop}
\label{prop:dim1}
If $\dim(S)=1$, then $E$ is isometric to the metric suspension
of a rotation in a plane with cone points.
  \end{prop}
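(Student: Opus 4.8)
The plan is to unwrap the soul and invoke the splitting theorem. Since $\dim S=1$ and $S$ is a connected compact submanifold without boundary, it is a circle, and by the preceding section $\vert E\vert$ is an $\mathbb{R}^2$--bundle over $S$; in particular $\pi_1(E)\cong\mathbb{Z}$, generated by $S$. First I would pass to the universal cover $\pi\co\hat E\to E$, whose deck group is the infinite cyclic group $\langle\gamma\rangle$. Writing $\hat S=\pi^{-1}(S)$, the covering $\hat S\to S$ is the universal cover of a circle, so $\hat S\cong\mathbb{R}$ is connected.

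The key preliminary point is that $\hat S$ is a \emph{line}. The soul is totally convex in $E$, and total convexity pulls back under the local isometry $\pi$: if $\hat\sigma$ is a geodesic of $\hat E$ with endpoints in $\hat S$, then $\pi\circ\hat\sigma$ is a geodesic of $E$ with endpoints in $S$, hence contained in $S$, so $\hat\sigma\subset\hat S$. Thus any minimizing geodesic between two points of $\hat S$ lies in the one--dimensional set $\hat S$ and therefore is an arc of $\hat S$; so $\hat S$ realizes the distance between any two of its points and is a complete minimizing geodesic, that is, a line.

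Now I would apply the splitting theorem recalled in the Remark to the line $\hat S\subset\hat E$ (a nonnegatively curved Alexandrov space, being a cover of $E$): it yields an isometry $\hat E\cong F\times\mathbb{R}$ in which $\hat S=\{p\}\times\mathbb{R}$ for some $p\in F$. Here $F$ is a complete two--dimensional Euclidean cone manifold, and since $\vert E\vert$ is an $\mathbb{R}^2$--bundle over the circle, $F$ is homeomorphic to $\mathbb{R}^2$, that is, a plane with finitely many cone points. It remains to show $\gamma$ respects this product. Because $\gamma$ is an isometry fixing $\hat S$ setwise, it carries each fibre $\{x\}\times\mathbb{R}$ --- a line all of whose points lie at distance $d_F(x,p)$ from $\hat S$ --- to a line at the same constant distance from $\gamma(\hat S)=\hat S$; in nonnegative curvature such a line is parallel to $\hat S$ by the flat strip theorem, hence again a fibre $\{x'\}\times\mathbb{R}$. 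Thus $\gamma$ permutes fibres, inducing an isometry $\rho$ of $F$, and a short computation with the product metric shows the $\mathbb{R}$--component is independent of $x$ and is a single isometry of $\mathbb{R}$; freeness of the deck action rules out a reflection, so it is the translation by $\ell=\operatorname{length}(S)$, and $\gamma(x,t)=(\rho(x),t+\ell)$ with $\rho(p)=p$.

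Finally, since $E$ is orientable and $t\mapsto t+\ell$ preserves the orientation of the $\mathbb{R}$--factor, $\rho$ is orientation preserving; an orientation--preserving isometry of the plane with cone points $F$ fixing $p$ is a rotation about $p$. Hence $E=\hat E/\langle\gamma\rangle$ is the mapping torus of $\rho$, i.e.\ the metric suspension of a rotation in a plane with cone points, as claimed. The step I expect to be the main obstacle is showing that $\gamma$ preserves the product splitting: the splitting direction is highly non--unique when $F$ is the flat plane with no cone points, so one cannot appeal to rigidity of a de Rham factor. What saves the argument is that the relevant line $\hat S$ is itself $\gamma$--invariant, and the constant--distance/parallelism argument above pins the $\mathbb{R}$--factor down uniformly in all cases.
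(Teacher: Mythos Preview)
Your proof is correct and follows the same strategy as the paper's: pass to the universal cover, observe that the lifted soul is a line, apply the splitting theorem to write $\hat E\cong F\times\mathbb{R}$, and identify $E$ as the mapping torus of the deck action on $F$. You go further than the paper in explicitly justifying why the deck transformation respects the product splitting (the paper simply asserts that ``the monodromy of the covering acts on $F^2$''), and your constant-distance argument---that $\gamma$ sends a fibre to a line at constant distance from $\hat S$, which in $F\times\mathbb{R}$ forces the $F$--component to be constant---is sound.
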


More precisely, $E$ is isometric to  
  $ [0,1]\times F^2/\sim$,
where $F^2$ is a plane with singular cone points, and $\sim$
identifies $\{0\}\times F^2$ with $\{1\}\times F^2$ by a rotation
(possibly trivial).

Again the cone angle defects on $F^2$ add up to {\color{black}$<$ $2\pi$}, hence the
{upper bound on the cone angles} $c<2\pi$ gives an explicit bound on the number of
singular components. If there are singular
components other than the suspension of a singular point in $F^2$
fixed by the rotation, then  the rotation has to be of finite order. In
{this case} {$\vert E\vert$ is homeomorphic} to a solid torus and the singular locus is made of
fibers of a Seifert fibration of  $\vert E\vert$ with at most one singular fibre
{(singular in the Seifert sense)}.

\begin{figure}
 \begin{center}
  \includegraphics[scale=.75]{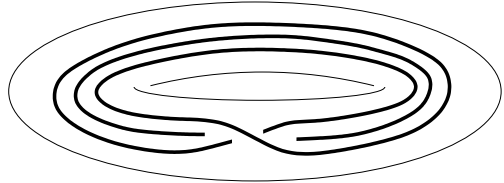}
  \end{center}
\caption{Cone manifold with soul $S^1$}
\end{figure}

\begin{proof}
Since $S$ is one dimensional, $S\cong S^1$. In particular
{$\pi_1   (\vert E \vert) \cong \mathbb{Z}$}. In the universal covering, $S$ lifts
to a line, hence by the splitting theorem $\tilde E=\mathbb{R}\times F^2$ for $F^2$ a two dimensional cone manifold. The
monodromy of the covering acts on $F^2$ by isometries, with a
fixed point corresponding to the soul.

Notice that $F^2$ is a non-compact Euclidean cone manifold with
non-empty singular locus. Hence the soul of $F^2$ is a point and $F^2$  is a
plane with several cone points.
\end{proof}

\section{Zero dimensional soul}
\label{sec:zero}

\begin{prop}
\label{prop:toproduct}
Let $\Sigma^{\mathit{noncpt}}$ denote the union of the non-compact components of
$\Sigma$. If $\dim(S)=0$, then the pair $(  \vert E\vert,
\Sigma^{\mathit{noncpt}})$ is homeomorphic to $\mathbb{R}^3$ with some
straight lines. {\color{black}The sum of the cone angle defects of these
non-compact components is}  $\leq 2\pi$.
\end{prop}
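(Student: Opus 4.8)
The plan is to read off the underlying topology from the soul structure and to extract both the asymptotic shape of $\Sigma^{\mathit{noncpt}}$ and the angle bound from the tangent cone at infinity. Since $\dim S=0$, the soul is a single point $p$, and the Cheeger--Gromoll description recalled in \fullref{sec:euclideancm} exhibits $\vert E\vert$ as a topological $\mathbb{R}^{k}$--bundle over $S$; with $S$ a point this gives $\vert E\vert\cong\mathbb{R}^{3}$ at once. It remains to place $\Sigma^{\mathit{noncpt}}$ inside this $\mathbb{R}^{3}$ and to bound its total angle defect.

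For both I would form the \emph{tangent cone at infinity} $C_\infty(E)=\lim_{\varepsilon\to0}(E,p,\varepsilon\,d)$ in the pointed Gromov--Hausdorff sense, which exists because $E$ is a complete non-negatively curved Alexandrov space. It is a flat metric cone $C(\Lambda)$ over a compact $2$--dimensional Alexandrov space $\Lambda$; flatness of the cone forces $\Lambda$ to be a spherical cone surface (curvature $\equiv 1$, finitely many cone points), and as $E$ is genuinely $3$--dimensional near infinity with $\vert E\vert\cong\mathbb{R}^{3}$, one gets $\Lambda\cong S^{2}$. Each compact component of $\Sigma$ is bounded and collapses to the apex under rescaling, whereas each of the $N$ non-compact components is a complete geodesic line whose two ends are rays; these $2N$ rays limit onto $2N$ points of $\Lambda$. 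Using the self-similar local model $\mathbb{R}\times C_{\alpha_i}$ about such a geodesic --- whose cone at infinity is the spherical suspension $S^{0}\ast S^{1}_{\alpha_i}$ --- one identifies each limit point as a cone point of $\Lambda$ of cone angle exactly $\alpha_i$ (the two ends being the two suspension poles).

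The angle bound is then immediate from Gauss--Bonnet on $\Lambda\cong S^{2}$,
\[
\operatorname{Area}(\Lambda)+\sum_j(2\pi-\theta_j)=4\pi ,
\]
where the $\theta_j$ are the cone angles of $\Lambda$; since curvature $\geq 1$ forces every $\theta_j\leq 2\pi$ all defects are non-negative, the $2N$ singular points contribute $2\sum_i(2\pi-\alpha_i)$, and $\operatorname{Area}(\Lambda)\geq 0$, whence $\sum_i(2\pi-\alpha_i)\leq 2\pi$. For the homeomorphism type of the pair I would show that $(\vert E\vert,\Sigma^{\mathit{noncpt}})$ is homeomorphic to $(C(\Lambda),\operatorname{sing}C(\Lambda))$; the singular set of a flat cone is the union of the radial rays over the cone points of $\Lambda$, which is visibly a standard family of straight lines through the origin of $\mathbb{R}^{3}$, in agreement with \fullref{prop:sigmaunkotted}. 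Such a homeomorphism should come from the gradient flow of the concave distance-to-soul function (a Sharafutdinov-type retraction), which trivializes $E\setminus B(p,R)$ as $\partial B(p,R)\times[R,\infty)$ for large $R$ and sends each singular ray to a fibre $\{*\}\times[R,\infty)$.

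\textbf{Main obstacle.} The delicate step is the last one. A non-compact singular geodesic is \emph{not} globally minimizing --- a minimizing line would split $E$ by the splitting theorem and force $\dim S\geq 1$ --- so it cannot simply be declared radial; one must instead control its image under the retraction using total convexity of the Cheeger--Gromoll filtration together with the product model near each singular ray, and check that the resulting map is a homeomorphism of \emph{pairs}. A secondary point, used implicitly above, is to rule out collapse of the tangent cone at infinity so that genuinely $\dim\Lambda=2$ and $\Lambda\cong S^{2}$; this is exactly where the hypotheses of a point soul and a submanifold singular locus enter.
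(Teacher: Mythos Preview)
Your route through the tangent cone at infinity is genuinely different from the paper's, and considerably heavier. The paper dispatches the topological assertion with a single observation: since the soul is a point, every set $C_t$ of the Cheeger--Gromoll filtration is a totally convex $3$--ball. Total convexity means that any geodesic segment with endpoints in $C_t$ lies entirely in $C_t$, and therefore any complete geodesic --- minimizing or not, singular or not --- meets $C_t$ in a single interval (possibly empty). Applied to each non-compact singular component $\Sigma^i$, this says that as $t$ increases the intersections $\Sigma^i\cap C_t$ are nested arcs exhausting $\Sigma^i$; the pair $(\vert E\vert,\Sigma^{\mathit{noncpt}})$ is thus an increasing union of balls-with-unknotted-arcs and is therefore standard. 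No asymptotic cone, no Sharafutdinov flow, no tracking of singular geodesics under a retraction: the ``main obstacle'' you flag simply does not arise.

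For the angle bound the two arguments are closer in spirit. Both apply Gauss--Bonnet to a $2$--sphere: yours is the ideal boundary $\Lambda$, the paper's is the finite sphere $\partial C_t$, which by convexity has non-negative intrinsic curvature. The paper's version avoids your secondary difficulty (ruling out collapse so that $\dim\Lambda=2$) and the need to argue that the two ends of each singular geodesic produce cone points in $\Lambda$ of exactly the right angle; on $\partial C_t$ one only needs the softer fact that where a singular axis crosses the convex boundary it contributes curvature at least equal to its angle defect.

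Your approach is not wrong in outline, but the pieces you identify as obstacles are real and would take work to close; the paper's total-convexity argument sidesteps all of them in two sentences.
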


In particular, if $\Sigma$ has no compact components, the singular
locus is unknotted. Notice that again we have an explicit bound of
the number of non-compact components of $\Sigma$ coming from the
upper bound on the cone angles $\leq c<2\pi$.

\begin{proof}
Since the soul $S$ is a point, all sets of the Cheeger--Gromoll filtration
are balls. Those sets are totally convex, thus they intersect each
non-compact geodesic (singular or not) in precisely an interval
({\color{black}possibly empty}),
even if the geodesic is not { minimizing. 
This } applies to the
non-compact singular components of $\Sigma$, therefore, as we
increase the sets of the filtration, the singular components
intersect the sets  in segments that are increasing. This implies the
first assertion of the proposition.

For the assertion about the cone angle defects, notice that the
sets of the filtration have boundary with non-negative intrinsic
curvature, by convexity. The contribution of cone points to the
total curvature is larger than the cone angle defects, and we apply
Gauss--Bonnet.
\end{proof}

{At this point, one has a classification in the case when there are no compact components of the singular 
set. Now we start the proof of  \fullref{thm:asymptotic}, which is the main tool
in the classification when there are compact 
singular components and the cone angles are
either $<\frac{2\pi}3$ or all $=\frac{2\pi}3$. This proof occupies the remainder of this section
 and the following one. }

\subsection[Proof of \ref{thm:asymptotic}: finding the sector E^2 x R]{Proof of \fullref{thm:asymptotic}: finding the sector
$\mathcal{E}^2\times\mathbb{R}$}
 Let $C$ be
a closed singular geodesic in $E$. By comparison, every ray
starting at some point of $C$ must be perpendicular to $C$. In
addition, since the sets of the Cheeger--Gromoll filtration are
totally convex, either they contain $C$ or are disjoint from $C$.

\begin{lem}
\label{lem:flatstrip}
Every ray starting at $C$ is contained in a
flat half-infinite cylinder bounded by $C$.
\end{lem}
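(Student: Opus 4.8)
The plan is to build the cylinder explicitly as the union of the asymptotes to $r$ emanating from the points of $C$, and to prove flatness by Toponogov rigidity. First I would fix a ray $r\co[0,\infty)\to E$ with $r(0)=p\in C$ and consider its Busemann function $b=b_r$, which is convex because $E$ is an Alexandrov space of nonnegative curvature. The first observation is that $b$ is constant on $C$: for every $q\in C$ there is an asymptotic ray $r_q$ to $r$ starting at $q$ (obtained as a subsequential limit of minimizing segments from $q$ to $r(t_n)$, $t_n\to\infty$), with $r_p=r$; by the comparison statement already established, $r_q$ is perpendicular to $C$, so the derivative of $b$ along $C$ at $q$ equals $-\cos\angle(r_q,C)=0$. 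Since $C$ is connected this forces $b|_C$ to be constant, say equal to $0$; equivalently, $b$ restricts to a convex periodic function along the closed geodesic $C$, hence is constant.

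Next I would assemble the asymptotes into a map $\Phi\co C\times[0,\infty)\to E$, $\Phi(q,t)=r_q(t)$, which satisfies $b(\Phi(q,t))=-t$. Near $C$ the local model $ds^2=dr^2+(\frac{\alpha}{2\pi}r)^2d\theta^2+dh^2$ shows that, after fixing the angular coordinate $\theta_0$ determined by $r'(0)$, the surface $\{\theta=\theta_0\}$ is a flat half-cylinder $S^1(\ell)\times[0,\epsilon)$ (here $\ell=\operatorname{length}(C)$ and the $h$--direction is $C$), totally geodesic and with boundary $C$; thus $\Phi$ is an isometric embedding onto a flat cylinder in a neighborhood of $C$. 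The content of the lemma is that this local flat cylinder extends all the way to infinity.

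The main step is therefore to show that $\Phi$ is a global isometric embedding onto a flat half-infinite cylinder. Fix $q_1,q_2\in C$ joined by a minimizing subarc $\sigma\subset C$ and consider the rays $r_{q_1},r_{q_2}$: both are perpendicular to $C$ and both have $b$ as Busemann function, so they are mutually asymptotic. The hinges at $q_1$ and $q_2$ have right angles, so Toponogov hinge comparison bounds $d(q_2,r_{q_1}(t))$ and $d(q_1,r_{q_2}(t))$ from above by $\sqrt{d(q_1,q_2)^2+t^2}$, while convexity of $b$ makes $t\mapsto d(r_{q_1}(t),r_{q_2}(t))$ nonincreasing. The flat-strip theorem for mutually asymptotic rays in nonnegative curvature then forces this width to be constant and the swept region to be a flat totally geodesic strip, along which $\Phi$ is a local isometry. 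Letting $q_1,q_2$ run around $C$ glues these strips into a flat half-infinite cylinder $Z\cong S^1(\ell)\times[0,\infty)$, totally geodesic, with $\partial Z=C$ and $r\subset Z$.

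The hard part is precisely this rigidity step: since the local model already gives constant width for small $t$, the difficulty is to rule out the width shrinking further out, equivalently to show that no curvature is captured by the strip as it goes to infinity, so that the two-sided Toponogov estimates squeeze to equality and the equality case yields genuine flatness. I would also need to check that the rays $r_q$ remain distinct and that the image of $\Phi$ acquires no singular or self-intersection points in its interior, the only singularities allowed being along $C=\partial Z$. Keeping track of the fact that $C$ is a singular geodesic with cone angle $\alpha<2\pi$ causes no trouble, because $E$ is a nonnegatively curved Alexandrov space, so the existence of asymptotic rays, Toponogov comparison, and the splitting/flat-strip rigidity all remain available, as recorded in the preliminary remarks.
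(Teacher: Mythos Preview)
Your overall strategy---build the cylinder from asymptotic rays $r_q$ and invoke a rigidity/flat-strip statement---is natural, but it diverges from the paper's argument and, as written, has a genuine gap.

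The paper does not use asymptotes at all. It \emph{parallel transports} the given ray $r$ along $C$ using the local product model, obtaining a priori only geodesic half-lines, and then proves each of these is still a ray by a direct comparison: if the transport of $r$ to $\sigma(x)\in C$ were shorter (so $s=d(\sigma(x),\bar\sigma(x))<t$ in the paper's notation), then for $d\gg 1$ one gets $d(r(t+d),C)\leq \bar d+s<t+d$, so $C$ would have points both on and strictly inside the Busemann sublevel $\{b_r\leq 0\}$, contradicting total convexity of that set (either $C\subset\{b_r\leq 0\}$ or they are disjoint). This replaces any appeal to a flat-strip theorem.

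Your argument has a real hole at the monodromy step. You assert that near $C$ the map $\Phi$ lands in the half-cylinder $\{\theta=\theta_0\}$ of the local model; but this already presupposes that every asymptote $r_q$ leaves $q$ at the \emph{same} angular coordinate as $r$, which is essentially what has to be proved. Perpendicularity to $C$ only pins $r_q'(0)$ to the circle of normal directions (of length $\alpha$), and nothing in your construction prevents the angular coordinate from drifting as $q$ goes around $C$ and failing to return to $\theta_0$. The paper treats this separately and crucially uses $\alpha<2\pi$: nontrivial monodromy would force the Busemann level set through $C$ to be two-dimensional with $C$ smooth inside it, impossible at a genuine cone axis. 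Your closing remark that the singularity of $C$ ``causes no trouble'' is thus exactly backwards---the hypothesis $\alpha<2\pi$ is what kills the monodromy, and your proof never invokes it. A secondary point: the ``flat-strip theorem for mutually asymptotic rays in nonnegative curvature'' you cite is not a standard black box (the usual flat-strip theorem is a $\mathrm{CAT}(0)$ statement; the CBB analogue is the splitting theorem for \emph{lines}), and your monotonicity claim for $t\mapsto d(r_{q_1}(t),r_{q_2}(t))$ does not follow from convexity of $b$ alone.
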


\begin{proof}
 Consider $r\co[0,+\infty)\to E$  a ray with
$r(0)\in C$.
Toponogov's  theorem applied to the triangle with edges $C$ and two
copies of $r([0,t])$,
when $t\to\infty$, gives that
$r$ and $C$ are perpendicular.

Next we show that $r$ can be parallel transported along $C$
 by another comparison argument.
Consider a parameterized subsegment $\sigma$ of $C$ starting at $r(0)$ of length $x$.
 For $t>0$, consider also a segment $\bar
\sigma$ starting at $r(t)$ parallel to $\sigma$  along $r$ of length $x$. Let
$s=d(\sigma(x),\bar \sigma(x))$. We know by comparison that $s\leq
t $ and we claim that $s=t$.

\begin{figure}[ht!]
\begin{center}
\labellist\small
\pinlabel {$C$} [l] at 24 107
\pinlabel {$\sigma(0)$} [r] at 22 73
\pinlabel {$\sigma(x)$} [r] at 22 47
\pinlabel {$\bar\sigma(0)$} [b] at 86 73
\pinlabel {$\bar\sigma(x)$} [t] at 86 47
\pinlabel {$s$} [t] at 56 47
\pinlabel {$t$} [b] at 56 73
\pinlabel {$d$} [b] at 142 73
\pinlabel {$\bar{d}$} [t] at 142 61
\pinlabel {$r(d{+}t)$} [b] at 192 73
\endlabellist
\includegraphics[scale=1]{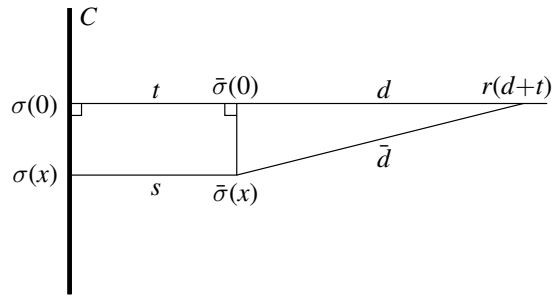}
\end{center}
\caption{Constructing the flat strip}
\label{fig:flatstrip}
\end{figure}

Seeking a contradiction, assume that $s<t$. Choose $d \gg 1$ and
set $\bar d= d(\bar\sigma(x), r(t{+}d))$. See \fullref{fig:flatstrip}. Since
$$
\lim_{d\to+\infty}\sqrt{d^2+x^2}-d=0,
$$
and $t-s>0 $ by hypothesis, applying comparison  $d$ can be chosen
large enough so that
$$
\bar d\leq  \sqrt{d^2+x^2} < d+t-s.
$$
Hence $d(r(d+t),C)\leq \bar d+s< d+t$. This would imply that $C$
has points inside and outside the sublevel zero set of the
Busemann function of $r$,  contradicting total
convexity.

This proves that the rays can be parallel transported along $C$.
We claim that this transport does not have monodromy, that is, once we
have made a whole turn around $C$, the ray is the same, so that it
gives a cylinder. We look
at the sublevel set $0$ for the Busemann function. For any vector $v$
tangent to this sublevel set, the angle between $v$ and $r$ is
$\geq \pi/2$. Hence there is no monodromy, otherwise this level
set would be two dimensional and $C$ smooth, but we are assuming
that the cone angle at $C$ is less than $2\pi$.
\end{proof}

For $p\in C$, all rays starting at $p$ are perpendicular to $C$,
hence the set of rays at $p$ lies in the set of directions
orthogonal to $C$, which is a circle of length equal to the cone
angle. In addition, those directions form an angle $\geq \pi/2$
with any direction tangent to the subset of the Cheeger--Gromoll
filtration. Thus it makes sense to talk about the two extremal or
outermost flat strips at $C$, which are the ones with larger
angle.

\begin{lem}
The two extremal flat strips at $C$ bound a metric  product
${\mathcal E^2}\times S^1$, where ${\mathcal E^2}$ is a 2--dimensional
Euclidean sector with cone points
and $C$ is the product of $S^1$ with the tip of the sector.
\end{lem}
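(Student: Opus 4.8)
The plan is to linearize the circle $C$ by passing to a cover and then applying the splitting theorem. Let $\hat E \to E$ be the cover in which $C$ unwraps to a complete geodesic $\hat C$, with cyclic deck group $\langle \tau\rangle$, where $\tau$ translates $\hat C$ by the length $L$ of $C$. Each flat half-cylinder produced in \fullref{lem:flatstrip} lifts to a flat half-plane $[0,+\infty)\times\mathbb R$ whose boundary is $\hat C$. I would first check that $\hat C$ is a \emph{line}, i.e.\ globally minimizing; granting this, the splitting theorem gives an isometry $\hat E \cong \mathbb R\times F$ under which $\hat C = \mathbb R\times\{q_0\}$, where $F$ is a complete $2$--dimensional Euclidean cone manifold. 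Since $F$ carries the cone point $q_0$ of angle $\alpha<2\pi$ and is non-compact (it contains the images of the perpendicular rays), its soul is a point and $F$ is a plane with cone points.

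Under this splitting the rays perpendicular to $C$ become rays of $F$ issuing from $q_0$, and their initial directions fill a closed arc $R$ of the circle of directions at $q_0$ (whose length is the cone angle $\alpha$); the two endpoints of $R$ are exactly the extremal directions, so the two extremal strips are $\mathbb R\times r_-$ and $\mathbb R\times r_+$, and the region they bound is $\mathbb R\times\mathcal{E}^2$, where $\mathcal{E}^2\subset F$ is the Euclidean sector cut out by $r_-$ and $r_+$ and based at $q_0$. Because $F$ is flat away from its cone points, $\mathcal{E}^2$ is a genuine Euclidean sector with (possibly) interior cone points and tip $q_0$. To push this structure back down to $E$, write $\tau = T_L\times g$ with $T_L$ translation by $L$ on $\mathbb R$ and $g$ an orientation-preserving isometry of $F$ fixing $q_0$. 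Here I invoke the absence of monodromy established in \fullref{lem:flatstrip}: parallel transport once around $C$ is the action of $g$ on the directions at $q_0$, and since it fixes the directions of $R$, and an orientation-preserving isometry of $F$ fixing $q_0$ together with a direction there is the identity, we get $g=\operatorname{id}$. Hence $\tau=T_L\times\operatorname{id}$, so $\hat E/\langle\tau\rangle = S^1\times F$ and the region bounded by the extremal strips is $S^1\times\mathcal{E}^2$, with $C=S^1\times\{q_0\}$ the product of $S^1$ with the tip, as claimed.

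The main obstacle is the middle step: verifying that $\hat C$ is genuinely minimizing, so that the splitting theorem applies. The flat half-planes give only $d_{\hat E}(\hat C(s),\hat C(t))\le |s-t|$, and I would obtain the reverse inequality by the same mechanism as in the proof of \fullref{lem:flatstrip}: a strictly shorter connection, pushed out along a ray, would force $C$ to have points on both sides of the zero sublevel set of a suitable Busemann function, contradicting the total convexity used throughout \fullref{sec:zero}. A minor additional point is to confirm that the region \emph{between} the extremal strips is the sector $\mathcal{E}^2$ rather than its complement; this holds because the extremal strips are those of largest angle, so $\mathcal{E}^2$ is the component of $F$ bounded by $r_-,r_+$ that contains all the remaining perpendicular rays.
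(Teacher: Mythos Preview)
Your argument has a genuine gap at its very first step: the cover $\hat E\to E$ in which $C$ unwraps to a line does not exist. We are in \fullref{sec:zero}, so the soul of $E$ is a point; by the Cheeger--Gromoll filtration (or \fullref{prop:toproduct}) the underlying space $\vert E\vert$ is homeomorphic to $\mathbb{R}^3$, hence simply connected. The closed curve $C$ is therefore null-homotopic in $E$, and there is no covering space of $E$ to which $C$ lifts as an infinite geodesic. Everything you do afterwards --- the splitting theorem, the analysis of $F$, the monodromy argument for $\tau$ --- rests on an object that is not available. (Restricting to a tubular neighbourhood of $C$, or to the region between the strips, does not immediately rescue the argument either: you would first need to know that $C$ is of infinite order there, which is essentially part of what the lemma asserts, and you would also need completeness to invoke the splitting theorem.)

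The paper gets around precisely this obstruction by \emph{manufacturing} a cone manifold in which $C$ is forced into the soul. It cuts $E$ along the two extremal half-cylinders, keeps the component on the ray side (of angular width $\le c-\pi<\pi$), and reglues the two boundary half-planes along $C$. In the resulting cone manifold $Y$ the curve $C$ is a closed singular geodesic of cone angle $<\pi$; such a geodesic must lie in the soul (cf.\ \cite[Lemma~4.2.5]{BP}), so the soul of $Y$ is one-dimensional and \fullref{prop:dim1} applies. The flat strip then rules out any nontrivial twist, giving the product $\mathcal E^2\times S^1$. Thus the paper replaces your unavailable unwrapping by a cut-and-reglue that reduces to the already-classified one-dimensional-soul case.
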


\begin{proof}
We cut along both extremal flat strips, and consider the connected
component with angle $\leq c-\pi <\pi$. We glue the two half planes by
an isometry fixing $C$ pointwise. We call $Y$ this new Euclidean cone
manifold. The singular geodesic $C$ gives a singular geodesic in $Y$,
that we also denote by $C$. The new cone angle is the angle between the
strips, which is $<\pi$, since it is bounded above by the cone angle
of $C$ in $E$ minus $ \pi$ (the angle between any ray and the sets of
the Cheeger--Gromoll filtration is $\geq\pi/2$). Since $C$ is a closed
singular geodesic in $Y$ with cone angle $<\pi$, it must be contained in
the soul of $X$, because the convex hull of any point close to $C$ meets
$C$ (see Boileau--Porti~\cite[Lemma~4.2.5]{BP}). The soul must
be $C$ itself, since a two dimensional soul cannot contain
a closed singular geodesic. Thus $Y$ is a mapping torus as in
\fullref{prop:dim1}. The flat strip implies that actually $Y$
is a product, and the lemma is clear.
\end{proof}

\section{Asymptotic behavior}
 \label{sec:asymptotic}

In the previous section we found the factor ${\mathcal E^2}\times S^1$.
Continuing the proof of \fullref{thm:asymptotic} and, 
in order to analyze the rest of the manifold, we remove the interior of the
product ${\mathcal E^2}\times S^1$. This space now has two ends, corresponding
to the two half-lines in the boundary of the sector ${\mathcal E^2}$. Let $X$
be one of the ends, that is, one of the unbounded components if we
further remove a convenient compact subset. We will not worry about which
compact subset we have removed to analyze $X$. 
{\color{black}In order to simplify the argument the proofs below use some
standard facts about cone manifolds with boundary. We leave it as an exercise
for the reader to check  that the proofs can also be done  by doubling $X$ along
its boundary.}

Notice that  at each point in $C\subset \partial X$ 
there is a \emph{single ray}
going to $\infty$, since we have chosen the outermost flat strips.
Thus the Tits boundary of $X$ is a single point. The fact that the
Tits boundary does not depend on the base point in $X$ implies that
for any two rays in $X$ with $r_1(0)=r_2(0)$,
$$
\lim_{t\to+\infty}\frac{d(r_1(t),r_2(t))}{t}=0.
$$
Also, by looking at equivalent definitions (see Guijarro--Kapovitch~\cite{GK}), if $S_R\subset X$ is
the metric
sphere of radius $R$ centered at some fixed point,
\begin{equation}
\label{eqn:diamSR}
\lim_{R\to+\infty} \frac{\operatorname{diam}(S_R)}R=0.
\end{equation}

\begin{lem}
\label{lem:anglesing}
 Fix a point $p\in X$. For any $q\in X$, as $d(p,q)\to\infty$,
the angle between a minimizing segment to $p$ and a ray starting at $q$ goes to $\pi$.
\end{lem}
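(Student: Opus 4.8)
The plan is to estimate the angle $\theta$ at $q$ between the minimizing segment $[q,p]$ and the ray $r$ by Toponogov comparison in the triangle with vertices $p$, $q$ and $r(t)$, reducing everything to a lower bound for $\rho(t):=d(p,r(t))$. Writing $L:=d(p,q)$, nonnegative curvature makes the Euclidean comparison angle at $q$ a lower bound for $\theta$, so for every $t>0$
\[
\cos\theta\ \le\ \frac{L^2+t^2-\rho(t)^2}{2Lt}.
\]
Hence it suffices to produce a lower bound of the form $\rho(t)\ge t+\delta$ with $\delta=\delta(q)$ satisfying $\delta/L\to1$ as $L\to\infty$: substituting $\rho(t)^2\ge(t+\delta)^2$ and letting $t\to+\infty$, the right-hand side tends to $-\delta/L$, whence $\cos\theta\le-\delta/L\to-1$ and $\theta\to\pi$, as desired.

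To find such a $\delta$, fix a ray $r_p$ starting at $p$ and let $b=b_{r_p}$ be its convex, $1$--Lipschitz Busemann function, normalized so that $b(r_p(R))=b(p)+R$. Both $q$ and $r_p(L)$ lie on the metric sphere $S_L$ centred at $p$, so by equation~(\ref{eqn:diamSR}) we have $d(q,r_p(L))\le\operatorname{diam}(S_L)=o(L)$; since $b$ is $1$--Lipschitz this gives $\delta:=b(q)-b(p)\ge L-\operatorname{diam}(S_L)$, so indeed $\delta/L\to1$. On the other side, $1$--Lipschitzness also gives $\rho(t)=d(p,r(t))\ge b(r(t))-b(p)$, so the inequality $\rho(t)\ge t+\delta$ follows once we know that $b$ increases at unit rate along $r$, that is $b(r(t))=b(q)+t$, i.e.\ that $r$ is an asymptote to $r_p$. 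This is exactly where the hypothesis that the Tits boundary of $X$ is a single point enters, since all rays head to the same point at infinity.

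I expect this unit--rate statement to be the main obstacle, because it fails for a general ray in a merely nonnegatively curved space and must be extracted from the single--Tits--point hypothesis. The clean case is that of a genuine co--ray $\hat r$ from $q$, obtained as a limit of minimizing segments $[q,r_p(s)]$: for such a $\hat r$ one has $b(\hat r(t))=b(q)+t$ by construction, and the computation above already proves that the angle between $[q,p]$ and $\hat r$ tends to $\pi$. To pass to an arbitrary ray $r$ I would compare $r$ with $\hat r$ using the sublinear divergence of rays issuing from the same point, which yields $b(r(t))\ge b(q)+t-d(r(t),\hat r(t))$ with $d(r(t),\hat r(t))=o(t)$. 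The delicate point, and the real content of the lemma, is that this $o(t)$ is \emph{not} known uniformly as $q\to\infty$, so one cannot simply let $t\to\infty$ with $L$ fixed; instead the two scales must be coupled, choosing $t$ of order $L/\varepsilon$ when one wants $\theta>\pi-\varepsilon$, so that the sublinear error is absorbed before it can overwhelm the gain $\delta\approx L$. Making this coupling work, or else deducing directly from the single Tits point that every ray is a true asymptote, is the step that requires care.
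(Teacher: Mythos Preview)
Your Toponogov setup and the target bound $\rho(t)\ge t+\delta$ with $\delta/L\to1$ match the paper exactly; the difference is only in how this lower bound on $\rho(t)=d(p,r(t))$ is obtained. You route it through the Busemann function $b$ of a fixed ray $r_p$ from $p$, which forces you to show $b(r(t))=b(q)+t$ for an \emph{arbitrary} ray $r$ from $q$. You rightly flag this as the obstacle, and it is a genuine one: the single--Tits--point hypothesis gives only $b(r(t))\ge b(q)+t-\eta(t)$ with $\eta(t)=o(t)$, and the rate in this $o(t)$ depends on $q$ and on $r$. Your proposed coupling $t\sim L/\varepsilon$ does not close the gap, since as $q$ varies the threshold beyond which $\eta(t)/t$ is small also varies, so no uniform choice of $t$ in terms of $L$ is available. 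The argument is therefore incomplete exactly where you say it is.

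The paper sidesteps Busemann functions altogether. On a minimizing segment $\sigma'$ from $p$ to $r(t)$, let $q'$ be the point with $d(p,q')=L$. Then $q,q'\in S_L$, so $d(q,q')\le\operatorname{diam}(S_L)$; and since $\rho(t)=L+d(q',r(t))$ while $|d(q',r(t))-t|\le d(q,q')$, one gets directly
\[
\rho(t)\ \ge\ L+t-\operatorname{diam}(S_L),
\]
which is your desired estimate with $\delta=L-\operatorname{diam}(S_L)$, valid \emph{uniformly} in $t$ and in the choice of ray $r$. Now let $t\to\infty$ in the comparison inequality and then use $\operatorname{diam}(S_L)/L\to0$. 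Applying \eqref{eqn:diamSR} to the point $q'$ on $\sigma'$, rather than to a point on a fixed ray from $p$, is the whole trick: it delivers the bound you were aiming for in one line and eliminates the asymptote issue entirely.
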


\begin{proof}
Let $r$ be any ray starting at $q$, and $\sigma$ any
minimizing segment between $p$ and $q$.

For large $t$, choose a segment $\sigma'$ between $p$ and
$r(t)$.
The segments $\sigma$, $\sigma'$ and a piece of
$r$ form a triangle with vertices $p$, $q$ and $r(t)$.
We want to show that its angle at $q$ is close to $\pi$. Let $q'$
be the point of $\sigma'$ such that $d(p,q)=d(p,q')$. By the
triangle inequality:
$$
\vert d(q',r(t))-t\vert \leq d(q,q').
$$
Thus
$$
\vert d(p,r(t))-d(p,q)-t\vert \leq d(q,q').
$$
By Equation~\eqref{eqn:diamSR}, the ratio $d(q,q')/d(p,q)$ is arbitrarily small,
(independently of the choice of $t$ and
$\sigma'$). By choosing $t$   arbitrarily large,  comparison implies that the
angle at $q$ between $\sigma$ and $r$ is arbitrarily close to
$\pi$.
\end{proof}

\begin{figure}[ht!]
\begin{center}
\labellist\small
\pinlabel {$p$} [r] at 5 8
\pinlabel {$q$} [b] at 76 35
\pinlabel {$q'$} [t] at 82 20
\pinlabel {$\sigma$} [br] at 44 22
\pinlabel {$\sigma'$} [t] at 116 25
\pinlabel {$r$} [l] at 203 35
\pinlabel {$r(t)$} [b] at 174 33
\endlabellist
\includegraphics[scale=1]{\figdir/triang}
\end{center}
\caption{The triangle $p$, $q$, $r(t)$ in the proof of \fullref{lem:anglesing}}
\end{figure}

\begin{cor}
\begin{enumerate}
\item The angle at $q$ between  any two minimizing segments to $p$ goes to 0.
\item The angle at $q$ between  any two rays starting at $q$ goes to 0.
\item If $q\in\Sigma\cap X$, then the angle between $\Sigma$ and any minimizing segment to $p$
goes to $0$. The angle between $\Sigma$ and any ray starting at $q$ goes to $0$.
\end{enumerate}
 \end{cor}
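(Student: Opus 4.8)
The plan is to deduce all three statements from \fullref{lem:anglesing} together with Toponogov comparison, using the fact that the comparison space is $\mathbb{R}^2$ (non-negative curvature). First I would prove (1). Fix two minimizing segments $\sigma_1,\sigma_2$ from $q$ to $p$; since both are minimizing they have the same length $L=d(p,q)$. Each $\sigma_i$ \emph{is} in particular a segment realizing the distance to $p$, and by \fullref{lem:anglesing} the angle at $q$ between $\sigma_i$ and \emph{any} ray $r$ starting at $q$ tends to $\pi$ as $L\to\infty$. Choosing a single fixed ray $r$ from $q$, the angles $\angle(\sigma_1,r)$ and $\angle(\sigma_2,r)$ both tend to $\pi$; since each is at most $\pi$ and the three directions live in the space of directions at $q$ (a circle of length $\leq c<2\pi$), the triangle inequality for angles forces $\angle(\sigma_1,\sigma_2)\leq (\pi-\angle(\sigma_1,r))+(\pi-\angle(\sigma_2,r))\to 0$.

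Statement (2) is the symmetric dual. Let $r_1,r_2$ be two rays starting at $q$. Fix a minimizing segment $\sigma$ from $q$ to $p$; by \fullref{lem:anglesing} applied to each ray, $\angle(\sigma,r_1)\to\pi$ and $\angle(\sigma,r_2)\to\pi$ as $d(p,q)\to\infty$. The same angle triangle inequality at $q$ then gives
\[
\angle(r_1,r_2)\leq (\pi-\angle(\sigma,r_1))+(\pi-\angle(\sigma,r_2))\to 0.
\]
Here it is convenient that at each point of $X$ there is essentially a single ray direction to $\infty$ (the Tits boundary is a point), so this is consistent, but the inequality itself does not require that — it is purely a triangle-inequality argument in the circle of directions at $q$.

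For (3), suppose $q\in\Sigma\cap X$. The singular locus $\Sigma$ near $q$ has two tangent directions at $q$ (the two directions along the singular geodesic); I must show each of them makes angle $\to 0$ with any minimizing segment $\sigma$ to $p$ and with any ray from $q$. The idea is that a direction tangent to $\Sigma$ is the limit of directions of minimizing segments to nearby points $q'\in\Sigma$ with $d(p,q')$ also large; alternatively, one observes that $\Sigma$ itself, being a complete geodesic through $q$, supplies in either of its two directions a segment that is a candidate to which \fullref{lem:anglesing} applies. More directly, pick a point $q''\in\Sigma$ on the chosen side of $q$ at controlled distance; the segment $[q,q'']$ along $\Sigma$ is a ray-like geodesic going outward, and by (2) its angle with any other ray from $q$ tends to $0$, while by \fullref{lem:anglesing} its angle with $\sigma$ tends to $\pi$ — but since we want the angle with $\sigma$ to tend to $0$, I instead orient $\Sigma$ toward $p$: the incoming branch of $\Sigma$ behaves like a minimizing segment to $p$, so by (1) its angle with $\sigma$ tends to $0$, and the outgoing branch behaves like a ray, so by (2) its angle with any ray tends to $0$.

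The main obstacle I anticipate is the bookkeeping in (3): one must justify that each of the two directions of $\Sigma$ at $q$ genuinely plays the role of either a minimizing segment to $p$ or a ray to infinity, and that this holds uniformly as $d(p,q)\to\infty$. This rests on the fact that a bi-infinite geodesic in $\Sigma$, viewed from a far-away point $q$, splits into an essentially-radial incoming half (pointing toward the filtration / toward $p$) and a radial outgoing half (the single ray to $\infty$), a dichotomy supplied by the total convexity of the Cheeger--Gromoll sets and the single-point Tits boundary. Once that identification is in place, (3) follows by simply re-applying (1) and (2) with $\Sigma$ in the role of one of the segments or rays.
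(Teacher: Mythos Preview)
Your arguments for (1) and (2) are correct and are exactly what the paper has in mind when it calls these ``straightforward.'' One small correction: at a singular point $q$ the space of directions is not a circle but a $2$--sphere with two cone points (a ``football''); nevertheless the triangle inequality for the angle metric holds there just as well, so your deduction goes through unchanged.

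For (3), however, your approach has a genuine gap, and the paper proceeds quite differently. You want to treat the two tangent directions of $\Sigma$ at $q$ as, respectively, the direction of a minimizing segment to $p$ and the direction of a ray, and then invoke (1) and (2). But neither identification is justified: the singular geodesic $\Sigma$ need not pass through $p$, so its ``incoming'' half is not a minimizing segment to $p$; and the ``outgoing'' half of $\Sigma$, while it goes to infinity, is not known to be \emph{minimizing} for all time, hence not known to be a ray. Your appeal to ``total convexity of the Cheeger--Gromoll sets and the single-point Tits boundary'' does not supply this --- total convexity only tells you that $\Sigma$ meets each sublevel set in an interval, not that either half is globally minimizing. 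Notice also that your argument never uses the hypothesis $c<2\pi$, which should be a warning sign.

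The paper's argument for (3) is direct and uses $c<2\pi$ essentially. At $q\in\Sigma$ the space of directions is the spherical suspension of a circle of length equal to the cone angle $\alpha\le c<2\pi$; the two suspension points are precisely the two directions tangent to $\Sigma$. In such a football the diameter is $\pi$, and because $\alpha<2\pi$ one checks (via the suspension distance formula) that distance $\pi$ is realized \emph{only} between the two poles. Hence if the directions of $\sigma$ and of $r$ make an angle close to $\pi$ (which is \fullref{lem:anglesing}), they are forced to lie close to the two singular directions. This immediately gives both conclusions of (3) without ever needing to know whether $\Sigma$ itself is a ray or a minimizer.
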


\begin {proof}
Assertions (1) and (2) are straightforward. To prove (3) we use
the upper bound on the cone angle $c<2\pi$.  The directions of
$\sigma$ and $r$ are arbitrarily close to the singular
directions, because this is the only way two directions of angle
close to $\pi$ can fit in the space of directions.
\end{proof}

\begin{cor} For any sequence $q_n\to\infty$, the
limit of pointed cone manifolds $(X,q_n)$ contains a line.
\end{cor}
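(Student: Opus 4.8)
The plan is to extract a pointed Gromov--Hausdorff limit of the $(X,q_n)$ and to produce a line in it not from the angle alone but as the limit of a family of minimizing segments whose interiors stay near the $q_n$ and whose endpoints recede to infinity on both sides. Since $X$ (after doubling along its boundary, as suggested above) is a complete Alexandrov space of nonnegative curvature and the distance spheres obey the sublinear bound \eqref{eqn:diamSR}, the pointed spaces $(X,q_n)$ are precompact in the pointed Gromov--Hausdorff topology; after passing to a subsequence I would fix a limit $(Y,y_\infty)$, again a complete nonnegatively curved Alexandrov space. Choosing a ray $r_n$ from $q_n$ and a minimizing segment $\sigma_n$ from $q_n$ to the fixed base point $p$ (whose length $\ell_n=d(p,q_n)\to\infty$), both families subconverge to rays $\rho^+$ and $\rho^-$ issuing from $y_\infty$, and by \fullref{lem:anglesing} the angle between $\sigma_n$ and $r_n$ at $q_n$ tends to $\pi$.

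The heart of the matter is to upgrade this configuration to a genuine line, that is, a bi-infinite \emph{minimizing} geodesic in $Y$; this is where I expect the real difficulty, since in nonnegative curvature two rays meeting at angle $\pi$ need not concatenate to a minimizing geodesic, as a flat cone of angle $<2\pi$ already shows. My plan is therefore to realize (a small perturbation of) $q_n$ as an interior point of an honest minimizing segment with both ends far away. Concretely, I would run the estimate in the proof of \fullref{lem:anglesing} with the far endpoint $r_n(L)$ in place of $r_n(t)$: it yields $d(p,r_n(L))=\ell_n+L-\epsilon_n(L)$ with defect bounded by $d(q_n,\hat q_n)$, where $\hat q_n$ denotes the point of the minimizing segment $[p,r_n(L)]$ lying on the distance sphere $S_{\ell_n}(p)$. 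Taking $L=L_n\to\infty$, the point $\hat q_n$ becomes an interior point of a minimizing segment whose backward arm has length $\ell_n\to\infty$ and whose forward arm has length $L_n-\epsilon_n(L_n)$; provided the defect stays controlled, both arms go to infinity, and the segments $[p,r_n(L_n)]$ converge in $(Y,\,\cdot\,)$ to a complete minimizing geodesic, i.e.\ a line.

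It remains to see that this line lives in the limit based at $q_n$ and not at some escaping basepoint. Both $q_n$ and $\hat q_n$ lie on $S_{\ell_n}(p)$, so the drift $d(q_n,\hat q_n)$ and the defect $\epsilon_n$ are bounded by $\operatorname{diam}\bigl(S_{\ell_n}(p)\bigr)$. If this diameter stays bounded, then $\hat q_n$ sits within a fixed distance of $q_n$, the pointed limits of $(X,q_n)$ and $(X,\hat q_n)$ coincide, and the line produced at $\lim\hat q_n$ is a line in $Y$, completing the proof. I would also invoke the preceding corollary to note that near infinity the ray direction and the direction back to $p$ are each essentially unique, so the two limit rays are unambiguous.

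The main obstacle is precisely the boundedness of the cross-sections, equivalently the bound on $d(q_n,\hat q_n)$ and on the defect $\epsilon_n$: the sublinear growth \eqref{eqn:diamSR} gives $\operatorname{diam}(S_R)=o(R)$, which is enough to make the angle tend to $\pi$ but not, by itself, to keep the long minimizing segments within bounded distance of $q_n$. Upgrading \eqref{eqn:diamSR} to a genuine bound on $\operatorname{diam}(S_R)$ — which reflects the thinness of the end that \fullref{thm:asymptotic} is ultimately establishing — is the step that carries the global-minimality content; once it is in hand, the remainder is routine Alexandrov and Gromov--Hausdorff compactness machinery.
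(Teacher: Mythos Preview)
You have correctly located the difficulty and are honest about it, but the resolution you propose --- upgrading \eqref{eqn:diamSR} to a uniform bound on $\operatorname{diam}(S_R)$ --- is circular: that bound is essentially the content of \fullref{thm:asymptotic}, and this corollary is a step toward it. So as written the proposal has a genuine gap, and it is precisely the gap you flag in your last paragraph.

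The paper closes it without any diameter bound. The clean route (the paper's second argument) uses the Busemann function rather than a long minimizing segment through $\hat q_n$. Let $b_n$ be the Busemann function of $r_n$, normalized so that $b_n(q_n)=0$; it is convex and $1$--Lipschitz. Along the segment $\sigma_n$ from $q_n$ toward $p$, convexity and \fullref{lem:anglesing} force the slope of $b_n$ to lie in $[-\cos\theta_n,\,1]$, where $\theta_n\to\pi$ is the angle at $q_n$; hence the slope tends to $1$ uniformly on bounded pieces of $\sigma_n$. Passing to the pointed limit $(Y,y_\infty)$, the limit function $b_\infty$ therefore satisfies $b_\infty(\rho^-(s))=s$ and $b_\infty(\rho^+(t))=-t$. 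Since $b_\infty$ is $1$--Lipschitz, $d\bigl(\rho^-(s),\rho^+(t)\bigr)\geq s+t$, and the triangle inequality through $y_\infty$ gives the reverse inequality; hence $\rho^-\cup\rho^+$ is itself a line. This directly dissolves your worry that two rays at angle $\pi$ need not concatenate to a minimizing geodesic: the Busemann function supplies exactly the missing global lower bound on distances.

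The paper also sketches a first, more geometric, variant which likewise avoids your obstacle: instead of running the minimizing segment all the way from $p$ to $r_n(L_n)$, take endpoints $a_n\in\sigma_n$ and $b_n=r_n(d_n)$ at a common distance $d_n$ from $q_n$, with $d_n\to\infty$ chosen \emph{slowly} in terms of $\pi-\theta_n$ so that $q_n$ stays within bounded distance of the minimizing segment $\overline{a_nb_n}$; these segments then subconverge to a line near $y_\infty$. Either way, the point you were missing is that one never needs $\operatorname{diam}(S_R)$ bounded --- only the sublinear estimate \eqref{eqn:diamSR}, packaged via \fullref{lem:anglesing} as ``the slope of $b_n$ along $\sigma_n$ tends to $1$'', is required.
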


\begin{proof}
Consider two points at distance $d_n$ from $q_n$, one on a ray
starting at $q_n$ and the other one in a minimizing segment to the
base point $p$. They form a triangle whose angle at $q_n$ goes to
$\pi$. Choose $d_n\to \infty$ depending on this angle so that the
distance between $q$ to the opposite edge of the triangle is
bounded. Thus it gives a line at the limit. 

Alternatively, \fullref{lem:anglesing} implies that the slope of the Busemann function 
restricted to the segment $\overline{pq}$ goes to one. Hence
the union of $\overline{pq}$ with a ray converges to a line.
\end{proof}

It follows from this corollary and the splitting theorem that
the {pointed Gromov--Hausdorff limit}
$\lim (X,q_n)=X_\infty$ is a product. We need however to
understand the behavior of the singular locus. We choose $q_n$ so
that the distance to the singularity is 1, and $q_n$ is contained in a 
{\color{black}{\em parallel copy of $C$} ie.\
a  smooth closed geodesic parallel to $C$.}

\begin{prop}
\label{prop:limits} The limit $X_\infty$ is a cone manifold
$X^2\times \mathbb{R}$, { where   $X^2$ is a disc 
with singular points and with boundary}
a parallel copy of $C$.
In addition,
the singular components of the
approximates become parallel to give the cone points of $X^2$, so
that when singular components merge at the limit then  the cone
angle defects add. 
\end{prop}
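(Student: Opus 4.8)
The plan is to establish \fullref{prop:limits} in two stages: first the splitting of the limit space as a metric product, and then the analysis of how the singular locus behaves under the limit. For the splitting, I would invoke the preceding corollary, which guarantees that $X_\infty = \lim(X,q_n)$ contains a line, together with the Cheeger--Gromoll splitting theorem for Alexandrov spaces of nonnegative curvature (available since our cone manifolds have cone angles $<2\pi$). This gives $X_\infty \cong X^2 \times \mathbb{R}$ isometrically, where $X^2$ is a two-dimensional nonnegatively curved cone manifold. The line direction is the one coming from the ray/segment combination built in the corollary, i.e.\ the direction asymptotic to the Tits boundary point. The choice $d(q_n,\Sigma)=1$ with $q_n$ on a smooth parallel copy of $C$ fixes the basepoint at distance $1$ from the singularity, so that $X^2$ is pointed at a smooth point at distance $1$ from its own singular set.

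Next I would identify $X^2$ as a disc whose boundary is a parallel copy of $C$. The key input is \fullref{lem:anglesing} and its corollary: near $q_n$, the singular geodesic $\Sigma$ makes an angle going to $0$ with both rays and minimizing segments to $p$, and by part (3) the singular directions at points of $\Sigma \cap X$ become arbitrarily aligned with the splitting (line) direction. Hence in the limit the singular components are \emph{parallel} to the $\mathbb{R}$-factor: each singular component of $X$, viewed through the rescaled-distance windows around $q_n$, converges to a line of the form $\{*\}\times\mathbb{R}$, and its image in $X^2$ is a single cone point. This is exactly the assertion that the approximates ``become parallel to give the cone points of $X^2$.'' To see that $X^2$ is a disc with boundary a parallel copy of $C$ rather than some other surface, I would note that $X$ itself is an end retaining the boundary circle $C \subset \partial X$; the component of the Cheeger--Gromoll filtration is a ball, so the cross-section $X^2$ inherits the topology of a disc, and its boundary is the limit of the smooth parallel copies of $C$ through the $q_n$ (which all have the same length, being geodesically parallel to $C$).

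For the additivity of cone angle defects, I would argue by Gauss--Bonnet / total-curvature continuity under Gromov--Hausdorff convergence of the two-dimensional cross-sections. The total curvature of $X^2$ concentrated at a cone point equals its angle defect, and total curvature (equivalently, the defect measure of an Alexandrov surface) is lower semicontinuous and in fact converges here because the masses are uniformly bounded by the global defect bound $\leq 2\pi$ coming from \fullref{prop:toproduct}. When two or more singular components of $X$ coalesce in the limit onto the same cone point of $X^2$, the limiting defect at that point is the sum of the approaching defects, since the total angle deficit around a fixed small loop enclosing the merging strands adds. Phrased infinitesimally: a small geodesic loop in $X^2$ encircling the limiting cone point lifts, for large $n$, to a loop in the cross-section of $(X,q_n)$ encircling all the merging singular components simultaneously, and its holonomy rotation angle is the sum of the individual cone-angle defects.

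The main obstacle I expect is making the convergence of the singular locus precise, rather than merely the convergence of the underlying metric spaces. Gromov--Hausdorff convergence controls $|X|$ but a priori says nothing about $\Sigma$; one must upgrade to convergence of the \emph{pairs} $(|X|,\Sigma)$ and verify that no singular mass escapes to infinity or disappears into the smooth part. The uniform upper bound $c<2\pi$ on cone angles (equivalently, a uniform lower bound on the angle defect per component) is essential here: it guarantees both that the number of components stays finite and bounded (via \fullref{prop:toproduct}) and that each defect is bounded away from $0$, preventing collapse of the singular stratum and ensuring that the limit cone points carry exactly the accumulated defect. The delicate point is ruling out the scenario where a singular component drifts off to the Tits-boundary direction without contributing a cone point in $X^2$; this is precisely where the alignment statement of the corollary to \fullref{lem:anglesing} (that singular directions tend to the line direction) does the work, forcing every retained singular strand to project to an honest cone point of $X^2$.
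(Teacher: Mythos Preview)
Your high-level plan (splitting theorem $\Rightarrow$ $X_\infty = X^2\times\mathbb{R}$, then singular components align with the $\mathbb{R}$--factor, then defects add) matches the paper's intentions, but you have skipped the actual content of the proof. The difficulty is not that singular strands might ``drift off to the Tits-boundary direction''; it is that when two or more singular components $\Sigma^i$ come together in the limit, the injectivity radius of the approximates near the merging strands goes to zero, and nothing you have said guarantees that the limit near the merged point is a \emph{cone manifold} rather than a more general Alexandrov space. The splitting theorem only gives $X_\infty = X^2\times\mathbb{R}$ as Alexandrov spaces; it does not by itself tell you that $X^2$ is flat away from a discrete set of cone points. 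Your appeal to ``Gauss--Bonnet / total-curvature continuity'' does not close this: lower semicontinuity of curvature measures would give an inequality, not equality, and more importantly you need a local flat model at the merged point, not just a mass count. Likewise your holonomy argument presupposes what it should prove: in the three-dimensional approximates the holonomy of a loop encircling several singular lines is a product of rotations about \emph{different} axes, and there is no reason this product is a rotation (it could be a screw motion) unless you already know the axes are parallel.

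The paper handles this by a multi-scale rescaling. One first rescales by the smallest separation $d(x^1_n,x^2_n)$ between singular components, proves an injectivity radius lower bound in that rescaled metric (\fullref{lem:injx1}), and concludes the rescaled limit $X^1_\infty$ is a genuine cone manifold (\fullref{lem:x1conemanifold}). Since $X^1_\infty$ contains a line it splits, and now the surviving singular components are honest parallel lines in a product. This parallelism at the intermediate scale is what makes the holonomy around the merging cluster a rotation by the summed defect, so that when one passes to the next coarser scale the merged cluster completes to a single cone line. One iterates over the finitely many scales at which components merge. Your proposal would need this injectivity-radius estimate and the inductive rescaling to become a proof; without them the assertion that $X^2$ is a Euclidean disc with cone points (rather than an arbitrary nonnegatively curved Alexandrov disc) is unsupported.
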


If the distance between singular components is bounded below away
from zero, then   $X_\infty$ is a cone manifold
and the argument for \fullref{prop:limits} is easy 
(see the proof below). 
Thus, as a preliminary step to prove this proposition, 
we need to understand how the singularities behave at the limit.

Denote by $\Sigma^1,\ldots,\Sigma^k$ the singular components of
$X$.  Take $x^i_n$ to be the intersection of $\Sigma^i$ with
the same level set as $q_n$ for the Busemann function. Assume that
$d(x^1_n,x^2_n)\to 0$ faster than the other $d(x^i_n,x^j_n)$,
that is, $d(x^1_n,x^2_n)\leq d(x^i_n,x^j_n)$.
 We take the rescaled limit
$$
\lim\biggl(\frac1{d(x^1_n,x^2_n)}X,x^1_n\biggr)= (X^1_{\infty},x^1_\infty).
$$
The singular component $\Sigma^1$ becomes a line at the limit
$X^1_{\infty}$, because by \fullref{lem:anglesing} the slope of the
Busemann function on $\Sigma^1$ converges to one.

\begin{lem}\label{lem:injx1}
The injectivity radius {\color{black}in $\frac1{d(x^1_n,x^2_n)}X$} at $x^1_n$ is bounded below away from zero.
\end{lem}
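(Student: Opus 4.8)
The plan is to show that, after rescaling by $1/d_n$ with $d_n:=d(x^1_n,x^2_n)$, a metric ball of \emph{definite} radius around $x^1_n$ is isometric to a standard ball of the local cone model, so that its injectivity radius is bounded below by that radius. The guiding principle is a separation of scales: the only geometric feature occurring at the rescaling scale $d_n$ is the approach of $\Sigma^2$ to $\Sigma^1$, whereas every other relevant quantity -- the distance to the remaining components $\Sigma^3,\dots,\Sigma^k$, the distance to the boundary $\partial X$, and the distance to the base point -- is \emph{macroscopic}, i.e.\ bounded below by a positive constant independent of $n$, and hence diverges to $+\infty$ after rescaling by $1/d_n\to+\infty$.

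First I would record the purely local fact that the cone model carries no obstruction to injectivity at a point of its singular axis. Near $x^1_n$ the manifold $X$ is modelled on $C_{\alpha_1}\times\mathbb{R}$, where $C_{\alpha_1}$ is the flat $2$--cone of angle $\alpha_1\le c<2\pi$ and $x^1_n$ corresponds to a point $(\mathrm{tip},0)$ on the axis. In this model every point is joined to $(\mathrm{tip},0)$ by a \emph{unique} minimizing geodesic (its projection to $C_{\alpha_1}$ must be the straight ray from the tip, which is unique, and the $\mathbb{R}$--coordinate is linear), and the space is flat off the axis, so there are no conjugate points either; thus the model has infinite injectivity radius at an axis point. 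A standard developing argument for cone manifolds then shows that, as long as $B(x^1_n,r)$ encloses no singular component besides $\Sigma^1$ and does not reach $\partial X$, it is isometric to the corresponding ball of $C_{\alpha_1}\times\mathbb{R}$; in particular $x^1_n$ has no cut point inside it and the injectivity radius of $X$ at $x^1_n$ is at least $r$.

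By the normalisation $d(x^1_n,x^2_n)\le d(x^i_n,x^j_n)$, the singular component nearest to $x^1_n$ other than $\Sigma^1$ is $\Sigma^2$, at rescaled distance exactly $1$, with $\Sigma^3,\dots,\Sigma^k$ at rescaled distance $\ge 1$. It therefore remains to bound $d(x^1_n,\partial X)/d_n$ below, and this is the step I expect to be the main obstacle: one must show that the merging of $\Sigma^1$ and $\Sigma^2$ takes place in the \emph{interior}, away from the boundary, so that $d(x^1_n,\partial X)$ stays bounded below by a positive constant while $d_n\to 0$. To see this I would use the unrescaled pointed Gromov--Hausdorff limit $(X,q_n)\to X_\infty$, which by the splitting theorem is a product $X^2_\infty\times\mathbb{R}$ (as noted just before \fullref{prop:limits}): the boundary and the cross--section converge, and since $q_n$ is normalised to lie at distance $1$ from $\Sigma$, the merging points $x^1_n,x^2_n$ converge to an \emph{interior} point of the limiting cross--section, at positive distance from its boundary. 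Equivalently, one may double $X$ along its flat, totally geodesic, cone--angle--$2\pi$ boundary, as suggested in the text; then $\partial X$ disappears and is replaced by the mirror singular locus, sitting at rescaled distance $2\,d(x^1_n,\partial X)/d_n\to+\infty$, and the developing argument above applies verbatim to a genuine cone manifold without boundary.

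Combining these, for large $n$ the ball $B(x^1_n,\tfrac12)$ in $\tfrac1{d_n}X$ contains no singular component but $\Sigma^1$, avoids $\partial X$, and is a standard cone--model ball; hence the injectivity radius at $x^1_n$ is at least $\tfrac12$, uniformly in $n$.
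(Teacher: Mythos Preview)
Your overall strategy---show that a rescaled ball of definite radius about $x^1_n$ is a standard cone-model ball---is reasonable, but two of the steps you treat as routine are exactly the places where the paper has to work, and in both places your argument has a gap.

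\medskip
\textbf{Distance to the components $\Sigma^j$ versus distance to the points $x^j_n$.}
You write that by the normalisation $d(x^1_n,x^2_n)\le d(x^i_n,x^j_n)$ the component $\Sigma^2$ sits at rescaled distance exactly $1$ and the others at rescaled distance $\ge 1$. But the $x^j_n$ are, by definition, the points where $\Sigma^j$ meets a \emph{fixed Busemann level}; nothing prevents $\Sigma^j$ from coming closer to $x^1_n$ at some other level. In other words, $d(x^1_n,x^j_n)\ge d_n$ does not give $d(x^1_n,\Sigma^j)\ge d_n$, and it is the latter that you need for the ball $B(x^1_n,\tfrac12)$ to avoid $\Sigma^j$. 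The paper closes this gap using \fullref{lem:anglesing}: a shortest segment $\gamma$ from $x^1_n$ to $\Sigma^j$ is almost orthogonal to every ray, hence the Busemann function is nearly constant along $\gamma$, while on $\Sigma^j$ its slope is close to~$1$; combining these forces the foot $y_n\in\Sigma^j$ of $\gamma$ to lie near $x^j_n$, and one deduces $|\gamma|\ge\tfrac12$. This Busemann-slope comparison is the missing ingredient in your step.

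\medskip
\textbf{The ``standard developing argument'' does not rule out short loops.}
Even granting that $B(x^1_n,r)$ meets no singular component other than $\Sigma^1$ and avoids $\partial X$, it does not follow by a developing argument alone that this ball is isometric to the model ball: one must still exclude short geodesic loops based at $x^1_n$. Such a loop lies in the smooth part away from its base point, but the disk it bounds in $|E|\cong\mathbb{R}^3$ may well cross some $\Sigma^j$ lying \emph{outside} $B(x^1_n,r)$, so the loop need not be contractible in $B(x^1_n,r)\setminus\Sigma^1$. (For a toy picture, think of $C_{\alpha_1}\times S^1_\varepsilon$: balls of radius slightly larger than $\varepsilon/2$ contain only the axis yet are not model balls.) The paper treats this explicitly: a geodesic loop $l$ at $x^1_n$ is, again by \fullref{lem:anglesing}, almost perpendicular to $\Sigma^1$, hence has vertex angle bounded strictly below $\pi$; one then pushes $l$ in the direction of that angle and observes that it either shrinks to a point within distance $\tfrac{|l|/2}{\cos(\alpha/2)}$ or meets $\Sigma$ along the way, and in either case the previous bound on distances to $\Sigma$ yields a definite lower bound on $|l|$.

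\medskip
In short, the essential input you are missing is \fullref{lem:anglesing}: it is what converts control of the points $x^j_n$ into control of the whole components $\Sigma^j$, and what forces short loops to be almost perpendicular to $\Sigma^1$ so that the pushing argument applies. Your treatment of $\partial X$ via doubling is fine in spirit (the paper suggests the same), but note that your appeal to the unrescaled limit $(X,q_n)\to X_\infty$ to locate $x^1_n$ away from the boundary borrows from \fullref{prop:limits}, which this lemma is a step towards; it would be cleaner to argue directly with Busemann slopes here as well.
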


\begin{proof}
Consider a small ball centered at $x^1_n$. It is a metric ball 
with a singular diameter (which is a piece of $\Sigma^1$).  
Increase its radius 
until the ball intersect itself or meets a singularity along its boundary;
the radius of this ball is the injectivity radius. We control it
 by finding
lower bounds for
the length of geodesic paths $\gamma$ joining
$x^1_n$ to $\Sigma$ and 
{for} the length of geodesic loops $l$ with base point
$x^1_n$.

Let $\gamma$ be a geodesic path joining $x^1_n$ to another
singular point $y_n$, so that $\gamma$ itself is not contained in
the singular locus. By taking the shortest one, we may assume that
$\gamma$ is perpendicular to $\Sigma$ at $y_n$. By
\fullref{lem:anglesing} and triangle comparison, $\gamma$ is
almost perpendicular to every ray. Thus the Busemann function
restricted to $\gamma$ is almost constant. Since the Busemann
function restricted to the singular components has slope close to
one, if $y_n\in\Sigma^1$ then the length of the singular segment
$\overline{x_n^1\, y_n}\subset \Sigma^1$ is much shorter than 
the length $\vert
\gamma\vert$. When we compute the injectivity radius, we increase
the radius of a ball centered at $x^1_n$, and such a path does not
appear. If $y_n$ belongs to some other component $\Sigma^j$ of $\Sigma$,
then $y_n$ has to be close to the corresponding $x^j_n$. 
Namely, using that the Busemann function $b_r$ 
restricted to $\gamma$ has slope less than $\frac{1}{2}$, and restricted to 
$\Sigma^j$ more than $\frac 12$:
$$
d(y_n,x_n^j)\leq 2\,\vert b_r(y_n)-b_r(x^j_n)\vert = 2\, \vert b_r(y_n)-b_r(x^1_n)\vert
\leq \vert\gamma\vert.
$$
Thus 
$$
\vert\gamma\vert\geq d(x^1_n, y_n)\geq d(x^1_n,x_n^j)- d(x_n^j, y_n)\geq 1-\vert \gamma\vert,
$$
and
$\vert\gamma\vert\geq \frac12$.

Given a short geodesic loop $l$ with base point $x^1_n$, by
comparison and \fullref{lem:anglesing}, $l$ is almost
perpendicular to $\Sigma^1$. Let $\alpha$ be the angle of $l$ at
the base point. Since $c<2  \pi$ is the upper bound of the cone
angle, almost perpendicularity gives another bound $\alpha\leq
c'/2<\pi$.
 By
pushing $l$ in the direction of the angle at $x^1_n$, if it does
not meet the singular set it shrinks to a point at distance
$$
\frac{\vert l\vert /2}{\cos(\alpha/2)}\leq \frac {\vert l\vert/2}{\cos(c'/2)},
$$
where $\vert l\vert$ denotes the length of $l$. So $\frac {\vert
l\vert/2}{\cos(c'/2)}\geq \frac 12$, which is the previous bound
for $\vert\gamma\vert $. This proves the claim.
\end{proof}

\begin{lem}\label{lem:x1conemanifold}
The limit $X^1_ {\infty}$ is a cone manifold.
\end{lem}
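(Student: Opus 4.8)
The plan is to show that $X^1_\infty$ is locally modelled on the standard Euclidean cone pieces, so that it qualifies as a Euclidean cone $3$--manifold. Write $d_n=d(x^1_n,x^2_n)\to 0$ for the rescaling factors, so that $X^1_\infty$ is the pointed Gromov--Hausdorff limit of the blow--ups $(\frac1{d_n}X,x^1_n)$; since these are Alexandrov spaces of curvature $\geq 0$ and dimension $\leq 3$, such a limit exists after passing to a subsequence. Because $\Sigma^1$ becomes a line in the limit (the slope of the Busemann function along it tends to one), the space $X^1_\infty$ contains a line, and the splitting theorem gives an isometry $X^1_\infty=Y^2\times\mathbb{R}$, with the $\mathbb{R}$--factor along $\Sigma^1_\infty$ and $Y^2$ an Alexandrov surface of curvature $\geq 0$. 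It therefore suffices to prove that $Y^2$ is a Euclidean surface with finitely many cone points of cone angle $<2\pi$.

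First I would establish non--collapsing at the basepoint. By \fullref{lem:injx1} the injectivity radius of $\frac1{d_n}X$ at $x^1_n$ is bounded below by some $\varepsilon_0>0$, so the ball $B(x^1_n,\varepsilon_0)$ is isometric to a standard neighbourhood of a singular edge, whose cone angle $\alpha^1<2\pi$ is the (rescaling--invariant) cone angle of $\Sigma^1$. Passing to the limit, a neighbourhood of $x^1_\infty$ is isometric to a cone--edge neighbourhood; in particular $Y^2$ is genuinely two--dimensional near the cone point $p^1$ coming from $\Sigma^1$, hence everywhere, and $X^1_\infty$ does not collapse.

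Next I would propagate this control to the other singular components. The normalisation $d_n=d(x^1_n,x^2_n)\leq d(x^i_n,x^j_n)$ means that in $\frac1{d_n}X$ the marked singular points are at mutual distance $\geq 1$, so no two components merge at this scale; running the estimate of \fullref{lem:injx1} with basepoint $x^i_n$ in place of $x^1_n$ bounds the injectivity radius there below as well. Consequently each surviving $\Sigma^i$ (those for which $d(x^1_n,x^i_n)/d_n$ stays bounded, a finite set) converges to a complete geodesic which is a line; these lines are disjoint, at mutual distance $\geq 1$, and project to isolated cone points $p^i$ of $Y^2$, near each of which $X^1_\infty$ is a standard cone edge of angle $\alpha^i<2\pi$.

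Finally I would treat the flat part. Away from the singular lines the approximates $\frac1{d_n}X$ are locally isometric to $\mathbb{R}^3$, and the injectivity radius there is controlled (by a fraction of the distance to $\Sigma$ together with the loop bound in the proof of \fullref{lem:injx1}), so the limit is flat off the lines $\Sigma^i_\infty$. Equivalently, the curvature of $Y^2$, as a weak limit of the curvature measures of the approximating cone surfaces (each a finite sum of atoms of total mass $\leq 2\pi$ at the $\Sigma^i$--points, which do not merge), is a finite sum of atoms supported at the isolated points $p^i$; thus $Y^2$ is flat with finitely many cone points of angle $<2\pi$, that is, a Euclidean cone surface. Therefore $X^1_\infty=Y^2\times\mathbb{R}$ is locally modelled on $\mathbb{R}^3$ and on singular cone edges, and is a Euclidean cone $3$--manifold. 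The main obstacle is the propagation of the non--collapsing bound from the single point $x^1_n$ to the whole limit, that is, ruling out lower--dimensional collapse and the appearance of non--edge singularities; this is exactly what the minimality of $d_n$ (uniform separation of the singular components) and the reuse of the estimate of \fullref{lem:injx1} are designed to provide.
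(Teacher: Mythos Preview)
Your core argument is correct and matches the paper's: propagate the injectivity radius estimate of \fullref{lem:injx1} from $x^1_n$ to every $x^i_n$ (using the minimality $d_n\leq d(x^i_n,x^j_n)$), then use the loop part of that same argument to bound the injectivity radius at all points within any fixed rescaled radius $R$ of the basepoint; uniform non--collapsing then forces the Gromov--Hausdorff limit to be a cone manifold. The paper's proof stops there and is only two sentences long.

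You have overlaid this with additional structure --- invoking the splitting theorem to write $X^1_\infty=Y^2\times\mathbb{R}$ and then arguing that $Y^2$ is a cone surface --- which is not needed for the present lemma and in fact anticipates the content of \fullref{prop:limits}. This does no harm, but your ``equivalently'' curvature--measure sentence is imprecise as written: the approximating spaces $\frac1{d_n}X$ are three--dimensional and not yet products, so there are no ``approximating cone surfaces'' on which to compare curvature measures without first specifying (say) Busemann level sets. Since you already gave the direct injectivity--radius argument, this does not affect correctness; you could simply drop that alternative phrasing.
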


\begin{proof}[Proof of \fullref{lem:x1conemanifold}]
The  argument of \fullref{lem:injx1}
also gives control of the injectivity radius
at each $x^i_n$. Once the distance to the singular locus is
controlled, the argument with the loops gives an injectivity radius estimate  for points
in $\frac1{d(x^1_n,x^2_n)}X$
at distance at most $R$ from $x_n^1$, for some fixed $R>0$.
This estimate is uniform on $n$ and $R$.
\end{proof}

Some components $\Sigma^i$ remain at the limit $X^1_{\infty}$ (at least
$\Sigma^1$ and $\Sigma^2$), some other components go to infinity.
The components that remain at the limit are parallel.
We shall use this to prove that,  
when rescaled by another factor, they  converge to a
single component whose cone angle defect is the limit.

\begin{proof}[Proof of \fullref{prop:limits}]
We take limits inductively, according to the order of
convergence to zero of $d(x^i_n,x^j_n)$, and always taking
subsequences, so that we use information from previous steps. More
precisely, in the limit $X^1_{\infty}$ we obtain the singular components
$\Sigma^i$ such that the ratio
$$
\frac{d(x^1_n,x^i_n)}{d(x^1_n,x^2_n)}
$$
 is bounded; the other components go to infinity.
We take the pair of coefficients $i$, $j$ such that
$d(x^i_n,x^j_n)\to 0$ with the next order of convergence. If
$i,j\neq 1$ and $\frac{d(x^1_n,x^j_n)}{d(x^i_n,x^j_n)}\to \infty$,
we repeat the argument of \fullref{lem:x1conemanifold} for the base point $x^j_n$ and we do not
care about $x^1_n$.  Otherwise we can assume $i=1$ and take the
limit
$$
\lim\biggl(\frac1{d(x^1_n,x^j_n)}X,x^j_n\biggr)= (X^j_{\infty}, x^j_\infty).
$$
At the ball  $B(x^j_\infty,1)$ of radius $1$, the singularity of
the approximating balls
$$B(x^j_n,1)\subset \frac1{d(x^1_n,x^j_n)}X$$ 
is controlled and therefore $B(x^j_\infty,1)$ is a cone
manifold. By the product structure, $X^j_{\infty}$ is a
 cone manifold at the neighborhood of $\Sigma^j$.
The {sequence} $x^1_n$ converges to $x^1_{\infty}\in X^j_{\infty}$ at
distance one from  $x^j_\infty$. 
For any $y_{\infty}\in X^j_{\infty}$
not in $x^1_{\infty}\times \mathbb{R}$, 
if $y_{\infty}$ is smooth, then the approximates $y_n$ are at
bounded distance from the singularity. 
Otherwise, if $y_{\infty}$ is singular, then  the 
$y_n$ are at bounded distance from the other singular components, 
by the choice of the indices $i$ and $j$.
Thus   the arguments in 
Lemmas~\ref{lem:injx1} and~\ref{lem:x1conemanifold} may be used to say 
that $X^j_{\infty}$
is locally a cone manifold away from  $x^1_{\infty}\times \mathbb{R}$.

\textbf{Claim}\qua
$X^j_\infty-(x^1_{\infty}\times \mathbb{R})$ is a non-complete
product cone manifold.

To take the metric completion of $X^j_{\infty}-(x^1_{\infty}\times
\mathbb{R})$, we take an arbitrarily
small loop around $x^1_{\infty}$ . By looking at the approximates to $X^1_{\infty}$ in \fullref{lem:x1conemanifold},
hence by changing the base point and the scale factor, the holonomy of this
loop must be a rotation of angle $2\pi$ minus the sum of
angle defects. This follows from the product structure of  $X^1_{\infty}$, 
and the fact that the rotation angle of the holonomy
does not depend on the scale factor and the choice of the base point (the conjugacy class).
Hence the completion is a cone manifold, and the
cone angle defect of the singularity is the sum of cone angle
defects of singular components merging with $\Sigma^1$. This proves the claim.

 We iterate
this process, which must stop by the finiteness of the number of
singular components.

Recall that the base points $q_n$ are contained in parallel copies of $C$ 
and that the distance to the singular locus is one. Thus the argument of 
\fullref{lem:injx1} applies to say that the injectivity radius at
$q_n$ is bounded below.
 This implies that the 2--dimensional factor $X^2$ in the limit
is a cone manifold containing at least one cone point and one
boundary component which is a geodesic circle. Therefore $X^2$
must be compact. Notice that the choice of base points $q_n$ does
not allow all singularities to merge to a single one, because this
would make the length of $C$    go to zero.
\end{proof}

It follows from \fullref{prop:limits} that the singular components are
asymptotically parallel. We claim that they are actually parallel.

\begin{prop}
Away from a compact set $X$ is a {\color{black}metric} product.
\end{prop}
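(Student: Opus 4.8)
The plan is to exhibit the product structure using the Busemann function $b\co X\to\mathbb{R}$ associated to the unique direction at infinity (recall that the Tits boundary of $X$ is a single point). Since $X$ has non-negative curvature in the Alexandrov sense, $b$ is convex and its sublevel sets $C_t=\{b\le t\}$ are totally convex; by \eqref{eqn:diamSR} the level sets $\partial C_t$ have diameter growing sublinearly, so for $t$ large they are compact discs which, by \fullref{prop:limits}, converge after normalization to the cross-section $X^2$. I would take as candidate product factor one such level disc $D=\partial C_{t_0}$ far out, and aim to show that the isometry type of $\partial C_t$ is independent of $t\ge t_0$ and that the region $\{b\ge t_0\}$ splits isometrically as $D\times[t_0,\infty)$.

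The heart of the matter is to upgrade the \emph{asymptotic} parallelism of \fullref{prop:limits} to \emph{exact} parallelism. First I would treat the singular components: each $\Sigma^i$ is a geodesic ray on which, by \fullref{lem:anglesing}, the slope of $b$ tends to $1$; since $b$ is convex, its slope along the geodesic $\Sigma^i$ is monotone non-decreasing, and it is bounded above by $1$ because $b$ is $1$--Lipschitz, hence it increases to $1$. To force the slope to equal $1$ beyond a compact set I would invoke the equality case of Toponogov comparison (the flat strip / splitting rigidity): the product limit $X^2\times\mathbb{R}$ supplies, for each pair of asymptotically parallel singular rays, and for each singular ray paired with the flat half-cylinder issuing from $C\subset\partial X$ (\fullref{lem:flatstrip}), a lower barrier matching the convexity upper bound. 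The sandwiching forces the separations to be eventually constant, so that these geodesics bound genuine flat strips and the singular rays become eventually \emph{vertical}, that is, they are $\mathbb{R}$--fibres of the product.

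Once the singular locus is exactly parallel, the complement of $\Sigma$ in $X$ is a genuinely flat Euclidean manifold away from a compact set, and there $b$ is a smooth convex function whose slope along every ray equals $1$; I would then show that its gradient $\nabla b$ is a parallel unit field, whose flow carries $D=\partial C_{t_0}$ isometrically onto $\partial C_t$ for all $t\ge t_0$. Gluing the flat strips along the singular rays to the flat smooth product in between yields the isometry of $\{b\ge t_0\}$ with $D\times[t_0,\infty)$, which is the asserted metric product. The step I expect to be the main obstacle is precisely this rigidity one: general Alexandrov convexity together with the sublinear diameter growth only delivers asymptotic parallelism (slope $\to 1$), and converting the limiting $\to$ into an on-the-nose $=$ requires exploiting the piecewise-Euclidean structure of the cone manifold through the equality case of comparison, handled with particular care where distinct singular components have merged in the limit of \fullref{prop:limits}.
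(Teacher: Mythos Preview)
Your outline has a real gap precisely where you anticipate it, and the paper closes it by a completely different mechanism. Convexity of $b$ along a singular ray, with slope increasing to $1$, does not force the slope to equal $1$ on any subray; the ``flat strip'' rigidity you invoke is a non-positive curvature phenomenon, while in non-negative curvature the splitting theorem requires a \emph{line}, which you only have in the Gromov--Hausdorff limit and not in $X$ itself. Nor can you fall back on ``two non-parallel lines in $\mathbb{R}^3$ diverge linearly'', because minimizing segments between $\Sigma^i(t)$ and $\Sigma^j(t)$ in $X$ may cross other singular components, so the developed picture does not directly control the intrinsic distance. Nothing in your sketch supplies the rigidity that converts asymptotic parallelism into exact parallelism; the ``sandwiching'' step is asserted, not proved.

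The paper's argument is holonomy-theoretic. The smooth part $X-\Sigma$ has free fundamental group on meridians $\mu_1,\dots,\mu_k$; lift the holonomy to $\mathbb{R}^3\rtimes\Spin(3)$ and project the $\mu_i$ to $A_i\in\Spin(3)\cong S^3$, each at distance half the angle defect from $\mathrm{Id}$. The product $\mu_1\cdots\mu_k$ is homotopic to $C$, whose holonomy is a pure translation, so $A_1\cdots A_k=\pm 1$. Form the broken geodesic $\gamma$ in $S^3$ with vertices $\mathrm{Id},A_1,A_1A_2,\dots$: its segment lengths are the fixed half-defects, but by \fullref{prop:limits} the meridians can be chosen far out in $X$ so that the bending angles of $\gamma$ are as close to $\pi$ as one likes. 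A nearly straight path of fixed positive length cannot return to $\mathrm{Id}$, so the endpoint is the antipode; since the half-defects sum to at most $\pi$, $\gamma$ is forced to be a genuine geodesic of length exactly $\pi$, hence all rotation axes coincide and the singular components are exactly parallel. The global algebraic constraint ``product of meridian rotations is trivial'' is the missing ingredient your purely metric approach lacks.
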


\begin{proof}
By the Cheger--Gromoll filtration,  away from a compact set the 
pair formed by {$\vert X\vert $} and its singular locus is topologically a product
(\fullref{prop:toproduct}). By \fullref{prop:limits}, the
singular axis at $(X,q_n)$ are almost parallel.

We shall use the direct product  structure of the isometry group
$$\operatorname{Isom}^+(\mathbb{R}^3)=\mathbb{R}^3\rtimes \SO(3),
$$
 and
the fact that  the   holonomy lifts to $\mathbb{R}^3\rtimes \Spin(3)$ (see
Culler~\cite{C}). We
identify $\Spin(3)\cong S^3$ equipped with the standard round metric,
so that a rotation  in $\SO(3)$  of angle $\alpha\in [-\pi,\pi]$
lifts to two points in $S^3$, which are at respective distance
from the identity $\vert\alpha\vert/2$ and
$\pi-\vert\alpha\vert/2$.

The fundamental group of the smooth part $\pi_1(X-\Sigma)$
 is a free group generated by the meridians, so  we can
choose the lifts of their holonomy. In fact we will only look at
its projection to $\Spin(3)$, that we denote by
$A_1,\ldots,A_k\in \Spin(3)$.

For each meridian (with label $i$), if $\alpha_i$ is its cone angle defect, we choose
$A_i$ so that the distance to the identity in $S^3$ is
$\vert\alpha_i\vert/2$. The product of all meridians gives the
holonomy of $C$, which is a pure translation, so the product
$A_1\cdots A_k$
gives either the identity or its antipodal point in $S^3$,
at distance $\pi$.

We consider the piecewise geodesic path $\gamma$ in $\Spin(3)$ with ordered
vertices $ \operatorname{Id}$, $A_1$, $A_1\, A_2$,
\ldots , $A_1\, A_2\cdots A_k$. Notice that the angles
along $\gamma$ may depend on the conjugacy classes of the meridians,
but not the length of the pieces, because they are precisely half the
cone angle defects. By \fullref{prop:limits}, and possibly after
reordering the indexes, the conjugacy classes may be chosen so that
the angles along the path $\gamma$ are arbitrarily close to $\pi$.
Since the length of the pieces of $\gamma$ are fixed, the endpoint
of $\gamma$ cannot be the identity. Thus it is  antipodal to 
the identity. The bound on cone angle defects implies that the length of
$\gamma$ is at most $\pi$. Thus it is precisely $\pi$ and $\gamma$
is a geodesic, which implies that the singular axis are parallel.
\end{proof}

\begin{proof}[Conclusion of the Proof of \fullref{thm:asymptotic}]
Once we know that $X$ is a metric product away from a compact set,
we start to decrease the level set of the Busemann function until
it meets $C$. Hence \fullref{thm:asymptotic} is proved.
\end{proof}

\begin{proof}[Proof of \fullref{cor:2pi/3}]
We apply
\fullref{thm:asymptotic} to $E$. The dihedral angle between 
$D_1$ and $D_2$ in the boundary of the compact set $K$ is
less than the cone angle at $C$ minus $\pi$, because any direction
in $K$ has an angle $\geq \frac\pi2$ with any ray, and rays are
perpendicular to $C$. Thus, since the cone angle at $C$ is
$\alpha<\frac{3\pi}2$, the dihedral angle of the compact $K$ is
$<\alpha-\pi <\frac{\pi}2$. We claim that this angle is
zero. In order to show it, we analyze what happens to the singularity when
we shrink $K$ by taking distance subsets. Once the singularity in
one of the faces of $K$ reaches this boundary of the face, since
the singularity is perpendicular to the face, either it meets
immediately the other face (when the angle is zero), or it enters
the interior of $K$ (when the angle is larger than $\frac\pi2$). Since here
the dihedral angle is $<\frac\pi2$, it must be zero.
\end{proof}

In next section we will analyze what happens when the dihedral
angle of $K$ is precisely $\frac\pi2$, which implies working with cone
manifolds with cone angle $\frac{3\pi}2$. Larger cone angles would
probably give more complicated constructions.

\section{Constructions from an edge pattern on a parallelepiped}
\label{sec:edgepatterns}

We explain how to construct a non-compact Euclidean cone manifold
with cone angles $\frac{3\pi}2$ from an edge pattern on a rectangular parallelepiped.
 Such a cone manifold has soul a point and both closed and unbounded singular geodesics.

{\color{black}A rectangular parallelepiped is a subset of $\mathbb{R}^3$  isometric to the product of three finite closed intervals of positive length.
Consider the twelve edges of such a parallelepiped. } An \emph{edge pattern} consists of
joining the edges into intervals or circles, so that:
\begin{enumerate}
\item
The components of the pattern are one or
two circles and precisely four intervals.
 \item
 On each vertex, two of
the three edges are joined by the pattern.
\end{enumerate}

\begin{figure}[ht!]
\begin{center}
  \includegraphics[scale=.60]{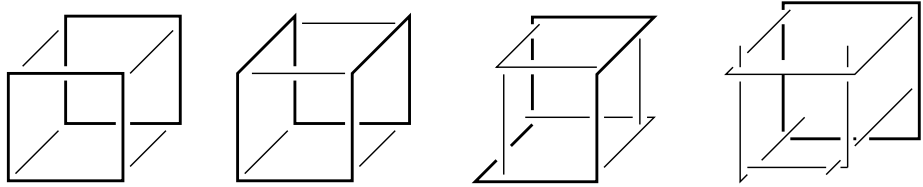} 
  \end{center}
   \caption{Examples of patterns, already realized metrically. The closed components are drawn thicker.}\label{fig:cubes1}
\end{figure}

\begin{figure}[ht!]
\begin{center}
  \includegraphics[scale=.75]{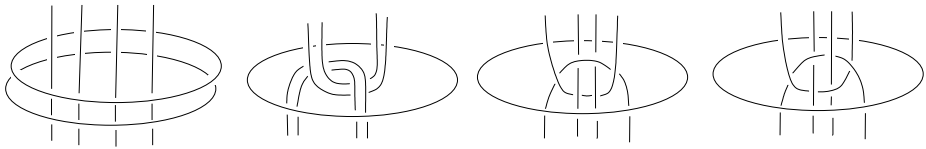}  
  \end{center}
   \caption{The singular locus of the respective cone manifolds of patterns in \fullref{fig:cubes1}, 
   respecting the order from left to rigth. The underlying space is $\mathbb{R}^3$ and all cone angles
   $\frac32\pi$.}\label{fig:linksfromcubes}
\end{figure}

We shall only consider patterns than are metrically realizable in
$\mathbb{R}^3$ satisfying the following properties. {\color{black}First we enlarge edges
as follows. Those
edges which have one free endpoint are enlarged to be unbounded geodesic rays and those 
with two free endpoints to be complete geodesics. Next we move by parallel
transport and slightly shorten  those edges which are not part of a circle in the pattern. 
This must be done so that at each corner the ray in the enlarged edge at that corner  lies
 inside the right angle defined by the other two segments joined in the pattern
at that corner. After this is done all the edges
must connect up to give the same combinatorial pattern.}
 Notice that this condition for a metric
realization already eliminates some patterns, see \fullref{fig:badpattern}.

\begin{figure}[ht]
\begin{center}
  \includegraphics[scale=.75]{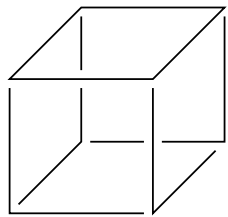}  
  \end{center}
   \caption{Example of pattern that cannot be realized metrically in $\mathbb{R}^3$. Notice that the
   edges must be parellel to coordinate axis.}\label{fig:badpattern}
\end{figure}

The cone manifold is constructed as follows. We   define a
subset in $\mathbb{R}^3  $ with piecewise geodesic boundary so that
the cone manifold is obtained by gluing its faces.  {\color{black}To each (possibly extended) 
edge $I$ in
the metric pattern, remove a {\em sector} of angle $\frac{\pi}2$ from
$\mathbb{R}^3$. This sector is a metric subset $I\times Q$
 bounded by planes parallel to the
sides of the parallelepiped. The set $Q$ is a quadrant in the plane
orthogonal to $I.$  
There are four such sectors and we chose the  one which is
in the opposite quadrant to the parallelepiped.  
The condition that the geodesic rays lie inside the right angle defined by segments
joined in the pattern implies that all the corners of the pattern
are removed. } The identifications consist of folding each such sector,
that is, in gluing the faces by a rotation. Thus the edges of the
pattern give the singular locus.

\begin{dfn}
Such a cone manifold is said to be \emph{constructed from an edge
pattern on a parallelepiped}.
\end{dfn}

Notice that two examples on the left of \fullref{fig:linksfromcubes}
are already described in Examples~\ref{ex1} and~\ref{ex2}, but the two
on the right are topologically different.

\begin{thm} Let $E$ be a Euclidean cone manifold with cone angles
$\frac{3\pi}2$, with soul a point and having a closed singular geodesic.
Then $E$ is constructed from an edge pattern in a parallelepiped.
\end{thm}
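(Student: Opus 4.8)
The plan is to reconstruct the parallelepiped together with its edge pattern from the asymptotic picture supplied by \fullref{thm:asymptotic}. Since the cone angles equal $\tfrac{3\pi}2<2\pi$, the soul is a point, and $E$ has a closed singular geodesic, \fullref{thm:asymptotic} applies and produces a compact core $K$ with $\partial K=D_1\cup_\partial D_2$, where $D_1,D_2$ are totally geodesic discs meeting along the geodesic circle $\gamma=\partial D_1=\partial D_2$, together with the three product ends $D_1\times[0,\infty)$, $D_2\times[0,\infty)$ and ${\mathcal E}^2\times S^1$. As in the proof of \fullref{cor:2pi/3}, the dihedral angle between $D_1$ and $D_2$ is at most the cone angle at $C$ minus $\pi$, that is, at most $\tfrac{3\pi}2-\pi=\tfrac\pi2$. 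I would split into the case where this angle is $0$ (which is \fullref{ex1}, a degenerate edge pattern with $K=D_1=D_2$) and the case where it equals $\tfrac\pi2$, treating the latter as the generic situation.

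Next I would pin down the combinatorics by Gauss--Bonnet together with estimate (3) of \fullref{thm:asymptotic}. Every cone point has defect $2\pi-\tfrac{3\pi}2=\tfrac\pi2$. Applying Gauss--Bonnet to each flat disc $D_i$ (geodesic boundary, hence zero boundary curvature, and $\chi=1$) shows that its cone defects sum to $2\pi$, so each $D_i$ carries exactly four cone points. Since the non-compact singular components are straight lines (\fullref{prop:toproduct}) and each such line, having infinite length, must cross the convex core $K$ joining the $D_1$--end to the $D_2$--end, these eight cone points are accounted for by exactly four non-compact singular lines, which will become the four interval components of the pattern. Estimate (3) with dihedral angle $\tfrac\pi2$ forces the cone defects of the sector ${\mathcal E}^2$ to sum to at most $\tfrac\pi2$, so ${\mathcal E}^2$ has at most one interior cone point; together with the tip geodesic $C$ this yields one or two closed singular geodesics, matching the one or two circle components.

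The heart of the argument is to develop the flat structure into $\mathbb{R}^3$ and recognize the box. I would cut $E$ along the singular locus and develop $E-\Sigma$ by the developing map, using that the holonomy lifts to $\mathbb{R}^3\rtimes\Spin(3)$ as in the preceding proposition and that the holonomy around each singular line is a rotation by the defect $\tfrac\pi2$. Reversing each fold, that is, re-inserting the $\tfrac\pi2$ wedge removed at each singular line, I expect the developing map to embed $E$ onto the complement in $\mathbb{R}^3$ of four $\tfrac\pi2$--sectors, with $D_1$ and $D_2$ developing into two perpendicular coordinate planes (the right dihedral angle supplying the orthogonality), the circle $\gamma$ developing onto the corresponding edge circuit, and the product ends developing onto prisms parallel to the coordinate axes. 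The bounded region cut out by these walls is then the rectangular parallelepiped, its twelve edges carry the singular locus, and the fold directions recover the pattern: at each vertex exactly two of the three edges are joined, and the metric-realizability condition that each ray lie inside the right angle is exactly the statement that the removed sectors occupy the quadrants opposite the box.

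The main obstacle will be the global consistency of this unfolding: showing that the local developments of the three product pieces and of the core fit together to a single embedding onto the sector complement, and that the resulting combinatorial data form a genuine edge pattern on one parallelepiped. Concretely, one must check that the four $\tfrac\pi2$--rotations about the parallel singular lines, composed around $C$, reproduce the pure translation that is the holonomy of $C$ (forcing the four lines onto the box as claimed), that all walls are parallel to the three coordinate planes, and that the joinings close up to the prescribed one or two circles and four intervals. The parallelism of the singular axes established in the preceding proposition, the finiteness of the singular components, and the $\Spin(3)$ length bookkeeping used there should be the tools that make these closing-up conditions hold.
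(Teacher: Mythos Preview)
Your setup matches the paper: apply \fullref{thm:asymptotic}, observe that the dihedral angle along $C$ is at most $\tfrac\pi2$, and use Gauss--Bonnet on each $D_i$ to get four cone points per disc. One small omission: you split into dihedral angle $0$ versus $\tfrac\pi2$ without saying why values strictly between cannot occur; the paper reuses the argument of \fullref{cor:2pi/3} to show that angle $<\tfrac\pi2$ forces $D_1=D_2$. A second small gap: you assert that each non-compact singular line joins a cone point of $D_1$ to one of $D_2$, but nothing you have said excludes a line with both ends in the same $D_i\times[0,\infty)$.

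The genuine gap is your third paragraph. Unfolding via the developing map is the \emph{inverse} of the construction you want to land in, but you have no independent control of $\operatorname{int}(K)$: \fullref{thm:asymptotic} says nothing about it. The developing map of $E-\Sigma$ is defined only on the universal cover and has no reason to be an embedding; the tools you propose for checking this (the $\Spin(3)$ length bookkeeping and the parallelism of the singular axes) were used in \fullref{sec:asymptotic} to analyse the \emph{ends}, where they work because of the asymptotic line-splitting, and none of that is available inside the compact core. In particular, closed singular geodesics not parallel to $C$ can sit entirely inside $K$ and are invisible to your count based on $\mathcal{E}^2\times S^1$, so you cannot even enumerate the circle components of the pattern from the outside. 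The paper supplies the missing idea by an intrinsic shrinking argument: it passes to the superlevel sets $K_t$ of the distance to $\partial K$ and tracks the combinatorics of $\partial K_t$. Because the dihedral angle is exactly $\tfrac\pi2$, each time a cone point on a face of $\partial K_t$ reaches an edge the corresponding singular arc turns to lie along the adjacent face, creating a new edge and two new corners; Gauss--Bonnet gives $n_{\mathit{cone}}+n_{\mathit{corner}}=8$ globally and $=4$ on each face throughout the process. After perturbing the shrinking speeds to keep the process generic (so that cone points never hit corners), one continues until $K_t$ collapses, and a short case analysis at the collapse forces $K_t$, just before, to be a rectangular parallelepiped. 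That is exactly the structural information about $\operatorname{int}(K)$ that your unfolding approach lacks.
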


\begin{proof}
We apply \fullref{thm:asymptotic}.  Let $K$ be the compact
subset, whose boundary is a union of two singular discs along
their boundary, which is a singular geodesic $C$. Since we assume
that the cone angle is $\frac{3\pi}2$, the dihedral angle of $K$
is at most $\frac{\pi}2$. By the same argument as in
\fullref{cor:2pi/3}, if the dihedral angle is $<\frac\pi2$,
then $E$ is as in \fullref{ex1} and therefore it satisfies the
theorem. From now on we assume that the dihedral angle is
precisely $\frac{\pi}2$.

We shrink $K$  by considering $K_t$ supperlevel sets of the distance to
$\partial K$ (equivalently the sublevel sets of the Busemann function). 
{\color{black}Initially, for small $t,$
these subsets are bounded by the union of two faces, forming a
dihedral angle along the boundary. Each face is a disc with  four cone points.
The boundary of $K_t$ stays of this form as it shrinks 
until a cone point meets the boundary of the disc.}

Since we assume that the dihedral angle is $\frac\pi2$, when a
cone point meets the boundary {\color{black}of one of the faces $\partial K_t$} at
an edge, a whole segment of the singular
component has to lie in the other face of $\partial K_t$. If we
shrink further, we realize that a new edge on $\partial K_t$ has been
created for every pair of cone points going to the boundary of the
disc. Now the boundary is a union of flat cone manifolds, meeting
along edges with cone angle $\frac\pi2$, and edges meet at
corners, so that each corner is trivalent. If $n_{\mathit{cone}}$ and
$n_{\mathit{corner}}$ denote the number of cone points on the {\color{black}faces} and
corners respectively, then
\begin{align}
n_{\mathit{cone}}+n_{\mathit{corner}} &=8 \quad\text{globally on }\partial K_t \label{eqn:global}\\
n_{\mathit{cone}}+n_{\mathit{corner}} &=4 \quad\text{on each {\color{black}face} of }\partial K_t \label{eqn:side}
\end{align}
by the Gauss--Bonnet formula.

We continue the process of shrinking,
until some other cone point meets the boundary of the face.

It may happen that a cone point meets the boundary of the face at
a corner. By using \eqref{eqn:side}, the corresponding face must
be either a triangle with a cone point or a bigon with two cone
points. This face cannot have three cone points, because 
the distance between cone points stays constant during the shrinking, but
the face has to collapse. Thus the edges of the corner are different.

When a corner meets a cone point,  we change the process of shrinking, so that
the speed is not the same on each face. Hence  the shrinking
process becomes generic and we avoid cone points converging to a
corner.

So we assume that the cone points meet the boundary at the interior of an edge.
This creates new edges and corners, satisfying Equations~\eqref{eqn:global} and
\eqref{eqn:side}, until we end up
in one of the following situations:
\begin{enumerate}
\item[(a)] a smooth point,
\item[(b)] a singular point, or
\item[(c)] a one or two dimensional cone manifold with boundary.
\end{enumerate}
In case (a),  shortly before the collapsing time there are no cone
points at all, and formulas \eqref{eqn:global} and
\eqref{eqn:side} imply that $K_t$ must be combinatorially a cube,
hence $K_t$ is isometric to a parallelepiped. In case (b), the
same argument  gives a triangular prism with two cone points on
the upper and lower face. By changing the speed of the faces as
before, $K_t$ is non-singular, hence  a parallelepiped.

In case (c), some of the faces have collapsed. Since we assume
that cone points do not meet corners, the collapsing faces must be
rectangles. So shortly before the collapsing time $K_t$ must be a
product $X^2\times [0,\varepsilon]$ or $X^1\times
[0,\varepsilon]^2$, where $\dim X^i=i$. Notice that $X^1\times
[0,\varepsilon]^2$ is already a parallelepiped, and there is
nothing to prove. For $X^2\times [0,\varepsilon]$, the list of all
possible $X^2$ is quickly determined by \eqref{eqn:side}, and it
follows that changing the shrinking speed of the faces also gives
a parallelepiped. 
\end{proof}

\bibliographystyle{gtart}
\bibliography{link}

\end{document}